\newcommand{\R}{\mathbb{R}}
\newcommand{\Z}{\mathbb{Z}}
\renewcommand{\phi}{\varphi}
\renewcommand{\subset}{\subseteq}
\theoremstyle{plain}
\newtheorem{theorem}{Theorem}
\newtheorem*{theorem*}{Theorem}
\newtheorem{corollary}[theorem]{Corollary}
\newtheorem{lemma}[theorem]{Lemma}
\theoremstyle{definition}
\newtheorem{definition}[theorem]{Definition}
\theoremstyle{remark}
\numberwithin{equation}{section}
\numberwithin{theorem}{section}
\newcommand{\lto}{\to}
\newcommand{\inn}{\not\in }
\newcommand{\dqki}{\nabla^{k,-}_{i}}
\newcommand{\dqi}[1]{\nabla^{#1,-}_{i}}
\newcommand{\ind}{\ \ \text{in } }
\begin{document}

\title{High-order  filtered  schemes for the  Hamilton-Jacobi continuum limit of  nondominated sorting\thanks{The research described in this paper was partially supported by NSF grant DMS-1713691  and a  University of Minnesota UROP award.}}

\author{Warut Thawinrak\thanks{Department of Mathematics, University of  Minnesota ({\tt thawi001@umn.edu  })}\and Jeff Calder\thanks{Department of Mathematics, University of  Minnesota ({\tt jcalder@umn.edu})}}
\maketitle
\begin{abstract}
We investigate high-order finite difference schemes for the Hamilton-Jacobi equation continuum limit of nondominated sorting. Nondominated sorting is an algorithm for sorting points in Euclidean space into layers by repeatedly removing minimal elements. It is widely used in multi-objective optimization, which finds applications in many scientific and engineering contexts, including machine learning. In this paper, we show how to construct filtered schemes, which combine high order possibly unstable schemes with first order monotone schemes in a way that guarantees stability and convergence while enjoying the additional accuracy of the higher order scheme in regions where the solution is smooth. We prove that our filtered schemes are stable and converge to the viscosity solution of the Hamilton-Jacobi equation, and we provide numerical simulations to investigate the rate of convergence of the new schemes.
\end{abstract}

\section{Introduction}
\label{sec:intro}

In this paper, we  investigate  high-order finite difference schemes for the Hamilton-Jacobi equation
\begin{equation}\label{P1}
\left.
\begin{aligned}
u_{x_1} \cdots u_{x_n} &= f& &\text{in } (0,1]^n\\
u &=0& &\text{on } \partial [0,1]^n \setminus (0,1]^n,
\end{aligned}\right\}
\end{equation}
where  $f\ge 0$.
This Hamilton-Jacobi equation is the continuum limit of nondominated sorting, which is an algorithm for arranging points in Euclidean space into layers by peeling away extremal points. Nondominated sorting is fundamental in multi-objective optimization problems, which are ubiquitous in science and engineering, and more recently in machine learning. For more details on the connection to nondominated sorting and applications, we refer the reader to \cite{calder2014,calder2015PDE,calder2017,calder2016direct,hsiao2015,hsiao2015b,hsiao2012,deb2002,deuschel1995,hammersley1972}.

The Hamilton-Jacobi equation \eqref{P1} has a unique non-decreasing viscosity solution. In order to select the viscosity solution of \eqref{P1}, the finite difference scheme is required to be monotone~\cite{barles1991}. Roughly speaking, the monotonicity property leads to a maximum principle for the scheme, which is one of the main techniques for proving stability and ensuring convergence to the viscosity solution. Unfortunately, all monotone schemes are necessarily first order at best~\cite{caldernotes}. 

It has been observed~\cite{barles1991,oberman2015filtered} that the monotonicity property can be relaxed to hold only approximately, with a residual error that vanishes as the grid is refined, while still ensuring the scheme converges to the viscosity solution. This allows one to design so-called filtered schemes, which blend together high-order nonmonotone schemes with monotone first-order schemes in such a way that the resulting filtered scheme is approximately monotone. The idea at a high level is to use the higher order scheme in regions where the solution is smooth while falling back on the monotone scheme near singularities.  High order filtered schemes have received a lot of attention recently \cite{oberman2015filtered,lions1995convergence,froese2013convergent,bokanowski2016efficient,warin2013some,sahu2017high,sahu2016high}.

In this paper, we show how to construct arbitrary order filtered upwind finite schemes for the Hamilton-Jacobi equation \eqref{P1}. The upwind direction for this Hamilton-Jacobi equation is to look backwards along the coordinate axes. Therefore, our upwind schemes all use backward difference quotients in order to follow the flow of the characteristics and select the viscosity solution. This allows the schemes to be solved in a single pass yielding fast (linear complexity) algorithms. We prove that our filtered schemes are stable and convergent for any order, and we present numerical simulations investigating rates of convergence.

Our numerical simulations lead to two surprising conclusions that merit future work. First, we observe that backward differences (without filtering) for the two dimensional version of \eqref{P1} appear to be stable for order $k\leq 2$, borderline stable for $k=3$ (numerical solutions remain bounded but do not converge), and highly unstable for $k\geq 4$. We recall that it is a classical fact that backward differences for the one-dimensional version of \eqref{P1} (e.g., $u'(x)=f(x)$) are stable for order $k\leq 6$ and unstable for $k\geq 7$. It would be interesting to prove the order $k=2$ scheme is stable for \eqref{P1}, and examine the situation in higher dimensions. We note the filtered higher order schemes are stable for any $k$.

Second, we observe that filtering higher order schemes with first order monotone schemes is only successful at increasing accuracy for order $k=2$. For order $k\geq 3$, we find that the filtering relies too much on the first order scheme, and while the schemes are stable and convergent, the order of accuracy is closer to first order. This is true even when the solution of \eqref{P1} is smooth. As far as we are aware, this observation has not appeared in the literature on filtered schemes; the existing literature~\cite{oberman2015filtered,lions1995convergence,froese2013convergent,bokanowski2016efficient,warin2013some,sahu2017high,sahu2016high} has only considered numerical experiments with second order schemes. This observation refutes the conventional wisdom that one can filter any higher order scheme---the choice of the higher order scheme may be crucial, and would be an interesting problem to pursue in future work.

We should note that while some filtered schemes show higher order convergence rates in some test cases, there are no proofs that any schemes have convergence rates better than first order in general. This is a limitation in the viscosity solution theory; in fact, since solutions are not classical, the best provable rate in general is $O(\sqrt{h})$, with a one-sided $O(h)$ rate when the solution is semi-concave~\cite{caldernotes}. Even in the special case where the solution of the Hamilton-Jacobi equation is smooth, it is generally difficult to prove a higher order convergence rate, since it requires a strong stability result for the higher order scheme and the maximum principle is unavailable.

The coefficients of the difference quotients in our schemes can be obtained by solving a small linear system involving a Vandermonde matrix, as is common in the literature. As an interesting addition to the paper, we give explicit formulas for the coefficients for arbitrary order backward  difference quotients, and give simple direct proofs of the formulas. We expect these coefficients have appeared explicitly before in the literature, but we include the results for completeness. Our method extends to computing the coefficients for nonsymmetric and offset centered differences, which we explore in Section \ref{sec:diff}. We do not explicitly invert the Vandermonde matrix, but this could be done via an LU factorization as was given in ~\cite{turner1966vandermonde}.

In \cite{calder2015PDE} a fast approximate nondominated sorting algorithm was developed based on estimating the distribution of the data, which is the right-hand side $f$, and solving the Hamilton-Jacobi equation \eqref{P1} numerically. The algorithm is called PDE-based ranking and was shown to be significantly faster than nondominated sorting in low dimensions. The higher order filtered schemes developed in this paper can be directly used in the PDE-based ranking algorithm to improve the accuracy with minimal additional computational cost. Thus, this work has the potential to have a broad impact in applications of nondominated sorting.

This paper is organized as follows. In Section \ref{sec:scheme} we present our filtered schemes and prove stability and convergence. In Section \ref{sec:sims} we present the results of numerical simulations, and in Section \ref{sec:diff} we give explicit formulas for the coefficients of various difference quotients.

\section{Filtered schemes}
\label{sec:scheme}

In this section we introduce our filtered schemes and prove stability and convergence.

 \subsection{ Backward differences}
 \label{sec:bfd}

 Since the upwind direction for \eqref{P1} is the negative orthant, our higher order schemes will all use backward difference quotients. The following theorem gives the exact coefficients for all backward different quotients for first derivatives. We expect this is known in the literature, but we give the proof for completeness.

\begin{theorem}\label{BackwardDiff}
 Let $k$ be a positive integer  and  $f\in C^{k +1}$. Then
$$f'(x) = \frac{1}{h}\sum_{i=1}^{k}c_i\left[f(x-ih) - f(x)\right] + O(h^k)$$
where $$c_i = \frac{(-1)^{i}}{i}\binom {k}{i} \ \ \text{for } i = 1, \dots, k.$$
\end{theorem}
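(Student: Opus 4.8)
The plan is to reduce the statement to a finite family of algebraic \emph{moment conditions} on the coefficients $c_i$, and then to verify those conditions through a single classical binomial identity.

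First I would Taylor-expand each shifted evaluation about $x$. Since $f\in C^{k+1}$, for each fixed $i\in\{1,\dots,k\}$ the Lagrange remainder gives
$$f(x-ih)-f(x) = \sum_{j=1}^{k}\frac{(-ih)^{j}}{j!}\,f^{(j)}(x) + O(h^{k+1}),$$
and because $i\le k$ is bounded and $k$ is fixed, the implied constant (controlled by $\sup|f^{(k+1)}|$ near $x$) is uniform in $i$. Dividing by $h$, multiplying by $c_i$, and summing over the $k$ terms, the remainders contribute only $O(h^{k})$, and interchanging the two finite sums yields
$$\frac{1}{h}\sum_{i=1}^{k}c_i\bigl[f(x-ih)-f(x)\bigr] = \sum_{j=1}^{k}\frac{(-1)^{j}h^{\,j-1}}{j!}\Bigl(\sum_{i=1}^{k}c_i\,i^{\,j}\Bigr)f^{(j)}(x) + O(h^{k}).$$

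Next I would read off what is needed for the right-hand side to equal $f'(x)+O(h^{k})$. The $j=1$ term carries $h^{0}$ and must reproduce $f'(x)$ exactly, while the terms $j=2,\dots,k$ carry the powers $h^{1},\dots,h^{k-1}$ and must therefore vanish identically. This demands
$$\sum_{i=1}^{k}c_i\,i = -1, \qquad \sum_{i=1}^{k}c_i\,i^{\,j} = 0 \quad (j=2,\dots,k).$$
Substituting $c_i = \tfrac{(-1)^{i}}{i}\binom{k}{i}$ and setting $m=j-1$, both families collapse to the single claim
$$\sum_{i=1}^{k}(-1)^{i}\binom{k}{i}i^{\,m} = \begin{cases} -1, & m=0,\\[2pt] 0, & 1\le m\le k-1.\end{cases}$$

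The crux is thus this combinatorial identity, which I expect to be the main (though standard) obstacle. For $m=0$ it is immediate from the binomial theorem, $\sum_{i=0}^{k}(-1)^{i}\binom{k}{i}=(1-1)^{k}=0$, so dropping the $i=0$ term leaves $-1$. For $1\le m\le k-1$ the $i=0$ term vanishes, so I may reinstate it and instead prove $\sum_{i=0}^{k}(-1)^{i}\binom{k}{i}i^{\,m}=0$; this is the assertion that the $k$-th finite difference annihilates every polynomial of degree $<k$. I would settle it cleanly with the operator $\theta = x\frac{d}{dx}$: applying $\theta^{m}$ to the generating identity $\sum_{i=0}^{k}(-1)^{i}\binom{k}{i}x^{i}=(1-x)^{k}$ reproduces the sum on the left after evaluation at $x=1$, while on the right each application of $\theta$ lowers the order of the zero of $(1-x)^{k}$ at $x=1$ by at most one; since $m<k$, a zero of positive order survives, so the value at $x=1$ is $0$. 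Combining the three displayed reductions completes the proof.
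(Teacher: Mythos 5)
Your proposal is correct and follows essentially the same route as the paper: Taylor expansion, reduction to the moment conditions $\sum_i c_i i = -1$ and $\sum_i c_i i^{\,j}=0$ for $2\le j\le k$, and verification of the resulting binomial identity by repeatedly applying the operator $x\frac{d}{dx}$ to $(1-x)^k$ and evaluating at $x=1$, which is exactly the paper's Lemma on $\sum_{i=0}^{n}(i+m)^k\binom{n}{i}(-1)^i=0$ specialized to $m=0$. The only (welcome) difference is that you are slightly more explicit about the uniformity of the Lagrange remainder in $i$.
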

 The proof relies on  an elementary lemma, which is useful to state independently.\begin{lemma}\label{BinomialPowerZero}
For any given integer $m$ and positive integer $n \geq 2,$
$$\sum_{i=0}^{n} (i+m)^k\binom {n}i (-1)^i = 0,$$
for all $k=1,2, \dots, n-1.$
\end{lemma}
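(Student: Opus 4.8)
The plan is to recognize the alternating binomial sum as, up to an overall sign, the $n$-th iterated forward difference of a polynomial, and then invoke the elementary fact that finite differencing lowers polynomial degree. Define the forward difference operator $\Delta$ by $\Delta g(t) = g(t+1) - g(t)$. A straightforward induction on $n$ gives the closed form $\Delta^n g(t) = \sum_{i=0}^n \binom{n}{i}(-1)^{n-i} g(t+i)$. Taking $g(t) = (t+m)^k$, evaluating at $t=0$, and using $(-1)^{n-i} = (-1)^n(-1)^i$, I would rewrite the target sum as
$$\sum_{i=0}^{n} (i+m)^k \binom{n}{i}(-1)^i = (-1)^n \Delta^n g(0).$$
So the claim reduces to showing $\Delta^n g \equiv 0$ whenever $1 \le k \le n-1$.

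Second, I would establish the degree-lowering property: if $p$ is a polynomial of degree $d \ge 1$ then $\Delta p(t) = p(t+1) - p(t)$ is a polynomial of degree $d-1$, since the leading terms cancel, while $\Delta c = 0$ for any constant $c$. Iterating, $\Delta^n p \equiv 0$ as soon as $n > d$. Here $g(t) = (t+m)^k$ is a polynomial of degree exactly $k$ in $t$ for every fixed integer $m$ (negative values included), and the hypothesis $k \le n-1$ gives $n > k$. Hence $\Delta^n g \equiv 0$, and in particular $\Delta^n g(0) = 0$, which yields the lemma.

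There is really no serious obstacle here; the only points requiring care are the bookkeeping of signs in the finite-difference closed form and the observation that the degree of $(t+m)^k$ is independent of $m$, both of which are routine. As an alternative that avoids the finite-difference formalism, one can set $F(x) = x^m(1-x)^n = \sum_{i=0}^n \binom{n}{i}(-1)^i x^{i+m}$ and apply the Euler operator $D = x\,\frac{d}{dx}$ exactly $k$ times, so that $D^k F(1)$ equals the desired sum; since $F$ has a zero of order $n$ at $x=1$ and each application of $D$ lowers that vanishing order by exactly one, $D^k F$ still vanishes at $x=1$ whenever $k \le n-1$. I would present the finite-difference argument as the main proof, as it is the most self-contained, and mention the generating-function version only as a remark.
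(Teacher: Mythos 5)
Your main argument is correct and takes a genuinely different route from the paper. The paper fixes the polynomial $q_0(x)=x^m(x-1)^n$, applies the Euler operator $x\,\frac{d}{dx}$ exactly $k$ times to produce $q_k(x)=\sum_{i=0}^n (i+m)^k\binom{n}{i}x^{i+m}(-1)^{n-i}$, and concludes from the fact that each application of $x\,\frac{d}{dx}$ reduces the order of vanishing at $x=1$ by at most one, so $q_k(1)=0$ for $k\le n-1$ --- this is precisely the ``generating-function version'' you relegate to a remark. Your primary proof instead identifies the sum with $(-1)^n\Delta^n g(0)$ for $g(t)=(t+m)^k$ and invokes the degree-lowering property of the forward difference operator, so that $\Delta^n$ annihilates any polynomial of degree $k\le n-1$. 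The identification $\sum_{i=0}^{n}(i+m)^k\binom{n}{i}(-1)^i=(-1)^n\Delta^n g(0)$ and the sign bookkeeping check out, and your observation that $(t+m)^k$ has degree exactly $k$ regardless of the sign of $m$ disposes of the only delicate point. The finite-difference argument is arguably more self-contained (it needs no discussion of orders of vanishing) and makes transparent why the bound $k\le n-1$ is sharp, while the paper's Euler-operator formulation has the advantage that the same machinery is reused later in the paper (Lemmas \ref{zeroCoefficient2} and \ref{PowerLeadingCoef} iterate $x\,\frac{d}{dx}$ on more general functions $x^r(x+1)^{-r}$ where the finite-difference viewpoint does not directly apply), so the two proofs are complementary rather than one strictly improving on the other.
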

\begin{proof}
Let
$$ q_0(x) = x^m(x-1)^{n} = \sum_{i=0}^{n}\binom {n}i x^{i+m}(-1)^{n-i}.$$
For $k \geqslant 1,$ define $$q_k(x) = x\frac{dq_{k-1}(x)}{dx}.$$
So, $$ q_k(x) = \sum_{i=0}^{n}(i+m)^k\binom {n}i x^{i+m}(-1)^{n-i}.$$
It's can also be observed that for all $k \leq n-1, q_k(1) = 0.$\\
Therefore, $$\sum_{i=0}^{n} (i+m)^k\binom {n}i (-1)^i = (-1)^{-n}q_k(1) = 0.$$
\end{proof}
 We now give the proof of Theorem \ref{BackwardDiff}.
\begin{proof}
By using a Taylor series expansion, we obtain
\begin{equation}\label{TaylorExpansion2}
\sum_{i=1}^{k}c_i\left[\sum_{j=1}^{k}\frac{(-ih)^j}{j!} f^{(j)}(x)\right]  + O(h^{k+1})= \sum_{i=1}^{k}c_i\left[f(x-ih) - f(x).\right]
\end{equation}
Rearranging the left hand side of equation \eqref{TaylorExpansion2}, we deduce
$$\sum_{i=1}^{k}c_{i}\left[\sum_{j=1}^{k}\frac{(-ih)^j}{j!} f^{(j)}(x)\right]  + O(h^{k+1}) = \sum_{j=1}^{k}\frac{h^j}{j!}f^{(j)}(x) \left[\sum_{i=1}^{k}c_{i}\cdot (-i)^j \right]  + O(h^{k+1}).$$
By binomial expansion, $$\sum_{i=1}^{k}c_{i}\cdot (-i)  = -\sum_{i = 1}^{k}\binom {k}{i}(-1)^{i} = \binom {k}{0} - \sum_{i = 0}^{k}\binom {k}{i}(-1)^{i}= 1.$$
By Lemma \ref{BinomialPowerZero}, for $j = 2, \dots, k,$
$$\sum_{i=1}^{k}c_{i} (-i) ^j = (-1)^{j}\sum_{i = 1}^{k} i^{j-1} \binom {k}{i}(-1)^{i} =(-1)^{j}\sum_{i = 0}^{k} i^{j-1} \binom {k}{i}(-1)^{i}= 0.$$
Thus,
$$\sum_{i=1}^{k}c_{i}\left[\sum_{j=1}^{k}\frac{(-ih)^j}{j!} f^{(j)}(x)\right] = hf'(x) + O(h^{k+1}).$$
Therefore, we yield $$f'(x) = \frac{1}{h}\sum_{i=1}^{k}c_i\left[f(x-ih) - f(x)\right] + O(h^k)$$ as desired.
\end{proof}
 We can also express the backward difference quotient in  a more usual form.
\begin{corollary}\label{ExplicitBackward}
 Let $k$ be a positive integer  and $f\in C^{k +1}$. Then
$$f'(x) = \frac{1}{h}\sum_{i=0}^{k}d_i \cdot f(x-ih) + O(h^k)$$
where 
\begin{eqnarray*}
d_0 &=& 1 + \frac{1}{2} + \cdots + \frac{1}{k}, \ \ \text{ and}\\
d_i &=& \frac{(-1)^{i}}{i}\binom {k}{i}, \ \ \text{for } i =1, \dots, k.\qedhere
\end{eqnarray*}
\end{corollary}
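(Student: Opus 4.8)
The plan is to derive the corollary directly from Theorem \ref{BackwardDiff} by regrouping terms, so that the only real content is a single binomial--harmonic identity. First I would expand the bracket in the theorem's formula, writing
$$\frac{1}{h}\sum_{i=1}^{k}c_i\left[f(x-ih) - f(x)\right] = \frac{1}{h}\sum_{i=1}^{k}c_i f(x-ih) - \frac{1}{h}\left(\sum_{i=1}^{k}c_i\right)f(x).$$
The first sum already has the advertised coefficients, since $d_i = c_i = \frac{(-1)^{i}}{i}\binom{k}{i}$ for $i = 1, \dots, k$. The term $f(x) = f(x - 0\cdot h)$ is the $i=0$ term, so matching the corollary's formula reduces everything to verifying that its coefficient is $d_0 = -\sum_{i=1}^{k}c_i$, i.e. to proving the identity
$$\sum_{i=1}^{k}\frac{(-1)^{i+1}}{i}\binom{k}{i} = 1 + \frac{1}{2} + \cdots + \frac{1}{k}.$$

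For the identity itself I would use the integral representation $\frac{1}{i} = \int_0^1 t^{i-1}\,dt$, which converts the weighted binomial sum into the integral of a polynomial that collapses via the binomial theorem. Concretely,
$$\sum_{i=1}^{k}\frac{(-1)^{i+1}}{i}\binom{k}{i} = \int_0^1 \frac{1}{t}\sum_{i=1}^{k}(-1)^{i+1}\binom{k}{i}t^i\,dt = \int_0^1 \frac{1 - (1-t)^k}{t}\,dt,$$
where the inner sum is evaluated using $\sum_{i=1}^{k}\binom{k}{i}(-t)^i = (1-t)^k - 1$. The substitution $u = 1-t$ turns the last integral into $\int_0^1 \frac{1-u^k}{1-u}\,du$, and factoring the geometric sum $\frac{1-u^k}{1-u} = 1 + u + \cdots + u^{k-1}$ and integrating term by term yields exactly $1 + \frac{1}{2} + \cdots + \frac{1}{k}$, as needed.

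The main obstacle is really only this harmonic identity, and even that is routine once the integral representation is in hand; everything else is bookkeeping. An alternative route, more in keeping with the elementary generating-function computation already used for Lemma \ref{BinomialPowerZero}, would be to prove the identity by induction on $k$ using Pascal's rule $\binom{k}{i} = \binom{k-1}{i} + \binom{k-1}{i-1}$. Splitting the sum along Pascal's rule reproduces the analogous sum for $k-1$ plus a residual alternating binomial sum; using $\frac{1}{j+1}\binom{k-1}{j} = \frac{1}{k}\binom{k}{j+1}$ together with $\sum_{m=0}^{k}(-1)^m\binom{k}{m}=0$ shows that this residual evaluates to $\frac{1}{k}$, giving the recursion $S_k = S_{k-1} + \frac{1}{k}$ with $S_1 = 1$. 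Either way, once the coefficient of $f(x)$ is identified with $1 + \frac{1}{2} + \cdots + \frac{1}{k}$, the $O(h^k)$ error term carries over verbatim from Theorem \ref{BackwardDiff}, completing the proof.
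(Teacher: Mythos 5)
Your proposal is correct and follows essentially the same route as the paper: identify $d_i=c_i$ for $i\ge 1$, reduce to $d_0=-\sum_{i=1}^k c_i$, and evaluate that sum via the integral representation $\tfrac1i=\int_0^1 t^{i-1}\,dt$ and the geometric factorization of $\tfrac{1-(1-t)^k}{t}$. The only cosmetic difference is your substitution $u=1-t$ before expanding the geometric sum, which the paper does directly in the variable $1-x$.
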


\begin{proof} It's easy to see that $d_i = c_i$ (defined as in Theorem \ref{BackwardDiff}) for $i \neq 0.$ In the case when $i = 0,$ we have
\begin{eqnarray*}
d_0 &=& -\sum_{i=1}^{k} d_i \\
&=&  \sum_{i=1}^{k} \frac{(-1)^{i-1}}{i}\binom {k}{i}\\
&=&  \int_0^1\sum_{i=1}^k \binom {k}{i}(-x)^{i-1} dx \\
&=& \int_0^1\frac{1-(1-x)^k}{x} dx\\
&=& \int_0^1 1 + (1-x) + \cdots + (1-x)^{k-1} dx\\
&=& \left[ x - \frac{(1-x)}{2}  - \frac{(1-x)^2}{3} - \cdots - \frac{(1-x)^k}{k} \right]_{x=0}^{x=1} \\
&=& 1 + \frac{1}{2} + \cdots + \frac{1}{k}.
\end{eqnarray*}
\qedhere
\end{proof}

 \subsection{High-order  filtered finite difference schemes}

The Hamilton-Jacobi equation \eqref{P1} does not have smooth or even Lipschitz solutions, due to a gradient singularity near the boundary where $x_i=0$. Indeed, in the special case that $f=1$ the viscosity solution is $u=n(x_1\cdots x_n)^{1/n}$. Following \cite{calder2017} we first perform a singularity factorization before solving the Hamilton-Jacobi equation. In particular, let $u$ be the viscosity solution of \eqref{P1} and define 
\begin{equation}
 w (x) =\frac{ u ( x)}{n \left( x_{1}\cdots x_{n} \right)^{1/n}}. 
\label{eq:factor}
\end{equation}
 It is possible to show \cite{calder2017} that $w$ is Lipschitz continuous and satisfies
\begin{equation}\label{eq:wlip}
[w]_{1;[0,1]^n} \leq \sqrt{n}[f^{1/n}]_{1;[0,1]^n}.
\end{equation}
 We can also show that  $w$ is the unique bounded viscosity solution of the Hamilton-Jacobi equation
\begin{equation}\label{P3}
\prod_{i=1}^n (w + nx_i w_{x_i}) = f\ \ \text{ on } (0,1]^n.
\end{equation}
  Although it appears that  \eqref{P3} does not have a boundary condition, it is in fact  encoded into the Hamilton-Jacobi equation, since $w (0) ^{n} = f (0)$.
 See \cite{calder2017} for more details on the above two statements.  The idea from \cite{calder2017}, which we borrow here, is to solve \eqref{P3} numerically and then undo the transformation  \eqref{eq:factor} to obtain a numerical approximation of  $u$.  The work  in \cite{calder2017} explored first order finite difference schemes for \eqref{P3}  and  proved optimal convergence rates of $ O (\sqrt{h})$.  We extend these ideas here to higher order filtered schemes.

 In light of Corollary \ref{ExplicitBackward} we define for  $u:\left[0,1  \right] ^{n}\lto  \R$  the $k ^{\rm th}$-order  backward difference quotient to be
\begin{equation}
\dqki u(x) =\frac{1}{h}\sum_{j=0}^{k}d_j u(x-jhe_i).
\label{eq:backwarddifference}
\end{equation}
Here, $h>0$ is the grid resolution, and as a convention we take $u (x) = 0$ whenever $x\inn  \left[ 0,1 \right]^{n}$.  The boundary value is irrelevant and does not enter into the scheme.  By  Corollary  \ref{ExplicitBackward} we have that
\[\dqki u(x) = u_{x_i}(x) + O(h^k)\]
 whenever  $u\in C^{ k +1}$.

 We define the first order upwind scheme  approximating  the left-hand side  of \eqref{P3}  to be
\begin{equation}
 F_{1} \left( x,w \right) = 
\begin{cases}
  \displaystyle\prod_{i=1}^{n}\left( w(x)+nx_i \dqi{1} w(x) \right), &  \text{if } \forall i,\,w(x)+nx_i \dqi{1} w(x)\geq 0\\
  -\infty,& \text{otherwise.}
\end{cases}
\label{eq:F1}
\end{equation}
 The first order scheme from \cite{calder2017} corresponds to solving
 \[  F_{1}( x, w)= f ( x) \ind [0,1]^{n}_{h},\]
 where $\Omega_h:=\Omega \cap h\Z^n$  for any $\Omega\subset\R^n$.  This scheme has a unique solution, and is monotone, stable, and convergent to the viscosity solution. Furthermore,  this  scheme can be solved with a (linear time) fast sweeping algorithm visiting each grid point exactly once. For more details on this scheme,  we refer the reader to \cite{calder2017}.
 
We define the $k ^{\rm th }$-order upwind  finite difference scheme  approximating the left-hand side of  \eqref{P3} by
\begin{equation}
 F_{k} \left( x,w \right) = \prod_{i=1}^{n}\left( w(x)+nx_i \dqki w(x) \right).
\label{eq:Fk}
\end{equation}
 The $k ^{\rm th }$-order upwind  finite difference scheme  is then given by
\begin{equation}
  F_{k}( x, w)= f ( x) \ind [0,1]^{n}_{h}.
\label{eq:Fk2}
\end{equation}
 We note that this scheme is not monotone and may not have a unique solution. We can construct a solution with a fast sweeping method, as  we did with the first  order scheme, but there is no guarantee that the scheme will be stable or convergent to the viscosity solution.  In Section \ref{sec:sims} we present results of simulations suggesting that the second order scheme is stable and convergent, but higher order schemes are unstable. 

The $k ^{\rm th }$-order \emph{filtered} upwind  finite difference  approximation of the left-hand side of \eqref{P3} is given by
\begin{equation}
 G_k \left( x,w \right) = 
\begin{cases}
  F_k(x,w), &  \text{if } |F_k(x,w)-F_1(x,w)|\le \sqrt{h} \text{ and } x\in [kh,1]^n ,\\
  F_1(x,w),& \text{otherwise.}
\end{cases}
\label{eq:Gk}
\end{equation}
 The $k ^{\rm th }$-order \emph{filtered} upwind  finite difference scheme  is then given by
\begin{equation}
  G_{k}( x, w)= f ( x) \ind [0,1]^{n}_{h}.
\label{eq:Gk2}
\end{equation}
  The idea behind the filtering is that when the solution is smooth we should have  $|F_k-F_1|\leq Ch$ and so the higher order scheme is selected. In regions where the solution is not smooth, the filtered scheme falls back on the monotone and stable first order scheme.  The key property of filtered schemes is that any solution  $w_h$ of \eqref{eq:Gk2} also satisfies
\begin{equation}
 f(x) - \sqrt{h} \leq F_{1}( x, w_h) \leq f ( x)+ \sqrt{h} \ind [0,1]^{n}_{h}.
\label{eq:filtProperty}
\end{equation}
  By approximately solving the first order  scheme, the solutions of the filtered schemes inherit the stability and convergence properties of the first-order scheme.  We note that solutions of \eqref{eq:Gk2} may not be unique, since the scheme is not monotone. We compute the solution as follows: At each  grid point we solve both of the schemes $F_1=f$ and $F_k=f$ for $w_h(x)$, taking the largest root when there are multiple solutions.  We then take the solution of $F_k=f$ and check if the first property in \eqref{eq:Gk} is satisfied.  If so,  we choose this solution for  $w_h(x)$,  otherwise we select the solution of $F_1=f$.  We analyze stability and convergence in the next section.

  \subsection{Stability and convergence}

 We prove here stability and convergence of the filtered schemes given in \eqref{eq:Gk2}.
\begin{theorem}
 Assume that $f\ge 0$ and let  $w_{h}$  be a solution of \eqref{eq:Gk2}.  Then we have
\begin{equation}
0\le w_h \le  \left( \max_{[0,1]^n}f + \sqrt{h}\right) ^{1/n}  \ind [0,1]^n_h.
\label{eq:stability}
\end{equation}
\label{thm:stability}
\end{theorem}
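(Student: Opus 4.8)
The plan is to exploit the filtered property \eqref{eq:filtProperty}, which reduces the entire argument to the structure of the first order scheme $F_1$: every solution $w_h$ of \eqref{eq:Gk2} satisfies $f(x)-\sqrt h \le F_1(x,w_h)\le f(x)+\sqrt h$ on $[0,1]^n_h$. Writing out the first order backward difference $\dqi{1} w_h(x) = \frac1h\left(w_h(x)-w_h(x-he_i)\right)$, each factor of $F_1$ has the form $w_h(x)+nx_i\dqi{1}w_h(x)$, and I would obtain both inequalities in \eqref{eq:stability} from discrete extremum principles applied to this expression. Throughout I use that $[0,1]^n_h$ is a finite set, so $w_h$ attains its maximum and minimum, and that the ghost convention $w_h\equiv 0$ off $[0,1]^n$ only helps at grid points adjacent to the boundary.

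For the lower bound I would argue by a minimum principle. The left inequality $F_1(x,w_h)\ge f(x)-\sqrt h > -\infty$ forces $F_1$ into its product branch at every grid point, so all factors satisfy $w_h(x)+nx_i\dqi{1}w_h(x)\ge 0$. Now suppose for contradiction that $w_h$ is negative somewhere and let $x^\ast$ be a minimizer. At such a point each backward difference is nonpositive: if $x^\ast-he_i$ lies in the grid then $w_h(x^\ast-he_i)\ge w_h(x^\ast)$ by minimality, and if it lies outside then $w_h(x^\ast-he_i)=0\ge w_h(x^\ast)$ since $w_h(x^\ast)<0$, so in either case $\dqi{1}w_h(x^\ast)\le 0$. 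As $nx_i^\ast\ge 0$, every factor then obeys $w_h(x^\ast)+nx_i^\ast \dqi{1}w_h(x^\ast)\le w_h(x^\ast)<0$, contradicting nonnegativity of the factors. Hence $w_h\ge 0$.

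The upper bound follows from the mirror-image maximum principle, now using $\bar w:=\max_{[0,1]^n_h}w_h\ge 0$ from the lower bound. At a maximizer $x^\ast$ each backward difference is nonnegative, since $w_h(x^\ast-he_i)\le w_h(x^\ast)$ whether $x^\ast-he_i$ is interior (by maximality) or exterior (where $w_h=0\le \bar w$). Consequently each factor satisfies $w_h(x^\ast)+nx_i^\ast\dqi{1}w_h(x^\ast)\ge w_h(x^\ast)=\bar w\ge 0$, so taking the product over $i$ gives $F_1(x^\ast,w_h)\ge \bar w^{\,n}$. The right inequality in the filtered property then yields $\bar w^{\,n}\le F_1(x^\ast,w_h)\le f(x^\ast)+\sqrt h\le \max_{[0,1]^n}f+\sqrt h$, and taking $n$th roots gives \eqref{eq:stability}.

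I expect the only delicate points to be, first, correctly invoking the $-\infty$ branch of $F_1$ to force all factors nonnegative at every grid point; this is precisely what makes the lower bound work and is the feature distinguishing this Hamilton-Jacobi operator from a generic product. Second, one must keep careful track of the ghost values at grid points adjacent to $\partial[0,1]^n$ so that the sign conditions on $\dqi{1}w_h(x^\ast)$ remain valid at extrema located on the boundary. Neither step requires monotonicity of the full scheme $G_k$: the argument uses only the first order comparison built into \eqref{eq:filtProperty}, which is exactly the purpose of the filtering construction.
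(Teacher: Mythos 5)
Your proposal is correct and follows essentially the same route as the paper: a discrete minimum/maximum principle at the extrema of $w_h$, combined with the filtered property \eqref{eq:filtProperty} forcing $F_1$ into its nonnegative product branch so that every factor $w_h(x)+nx_i\dqi{1}w_h(x)$ is nonnegative. Your write-up is in fact somewhat more careful than the paper's (the contradiction argument for the lower bound and the explicit treatment of boundary/ghost values tighten a step the paper states only tersely), but the underlying argument is identical.
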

\begin{proof}
At the minimum of $w_h$, we have $x_i \dqi{1} w_h (x) \leq 0$. Since $w_h+ n x_i\dqi{1} w_h (x) \geq 0$ for all $i = 1, 2, \dots, n,$ it follows that 
\[\min{w_h} \geq - n x_i\dqi{1} w_h (x)\geq 0.\]
This establishes a lower bound of $w_h.$

To determine an upper bound of $w_h$, we note that the filtered scheme satisfies
\[0 \leq F_1(x,w_h)  \leq f(x) + \sqrt{h}.\]
At the maximum of $w_h$, we have $x_i\dqi{1} w_h(x) \geq 0.$ Since $0 \leq w_h(x),$ it follows that
\[\left(\max{w_h}\right)^n \leq \prod_{i=1}^{n}\left( w(x)+nx_i \dqki w(x) \right) \leq f(x) +\sqrt{h} \leq \max{f} +\sqrt{h}.\]
Thus $\max{w_h} \leq (\max{f} +\sqrt{h})^{1/n}$.
This establishes an upper boundary of $w_h.$
Therefore, 
\[0\le w_h \le  \left( \max_{[0,1]^n}f + \sqrt{h} \right) ^{1/n}  \ind [0,1]^n_h,\]
and the stability follows.
\end{proof}

 Given the stability result in Theorem \ref{thm:stability},  it is now a standard application of the Barles-Souganidis  framework~\cite{barles1991}  to prove convergence of the filtered scheme \eqref{eq:Gk2} to the viscosity solution of \eqref{P3}.  In fact,  the proof of convergence  of the filtered schemes is very similar to the results in \cite{calder2017}.  Let us mention, however, that these convergence results do not establish any convergence rate. The best provable convergence rate for the filtered scheme is still $O (\sqrt{ h})$.  In practice, we do often see far better convergence rates for the filtered schemes, and we present simulations investigating convergence rates in Section \ref{sec:sims}.

\section{Numerical simulations}
\label{sec:sims}

We run simulations on both backward difference schemes and filtered schemes of orders 1, 2, 3, 5, 8, and 13 with two probability density functions $f_1$ and $f_2$ that were introduced before in \cite{calder2017}. The function $f_1$ is defined as follows:
\[f_1(x) = \frac{1}{4(k+1)^2}\prod_{i=1}^2 \left(\sum_{j=1}^2 \sin(kx_j)^2 + 2k + 2kx_i\sin(2kx_i)\right),\]
where $k>0$. In the simulations, we set $k=20$. The solution of \eqref{P1} in this case is known to be
\[u_1(x) = \frac{1}{k+1}\sqrt{x_1x_2} \left(\sin(kx_1)^2 + \sin(kx_2)^2+ 2k\right).\]
We note that the solution $u_1$ is smooth on $(0,1]^2$.

The function $f_2$ is defined as follows:
\[f_2(x) = \frac{1}{(C+2)^2}\left(w_2(x)  + 2(1+C)x(2)\right)\left( w_2(x) + 2x(1)\right).\]
where $x(i) = x_{\pi_x(i)}$ for a permutation $\pi_x$ such that $x(1)\leq x(2)$, and
\[w_2(x) =C\max\{x_1, x_2\} +  x_1 + x_2.\]
We set $C=10$ in the simulations. The solution in this case is known to be
\[u_2(x) = 2\sqrt{x_1x_2} w_2(x).\]

The function $f_2$ is Lipschitz and the solution $u_2$ has a gradient discontinuity. This is common in Hamilton-Jacobi equations due to crossing characteristics.

Given these $f_1$ and $f_2$, we gather the errors from their numerical solutions compared to the known solutions for each order of each scheme in different mesh sizes $h$. These errors are measured in both the $L^1$ norm and the $L^\infty$ norm as numerical evidence of the rate of convergence. The results from the simulations are shown in Figure \ref{fig:F2_d1} - Figure \ref{fig:F3_d4}. Note that here we use ``S'' to denote ``backward difference scheme'' and ``FS'' to denote ``filtered scheme''. 

\begin{figure}
\centering
\includegraphics[scale=0.65]{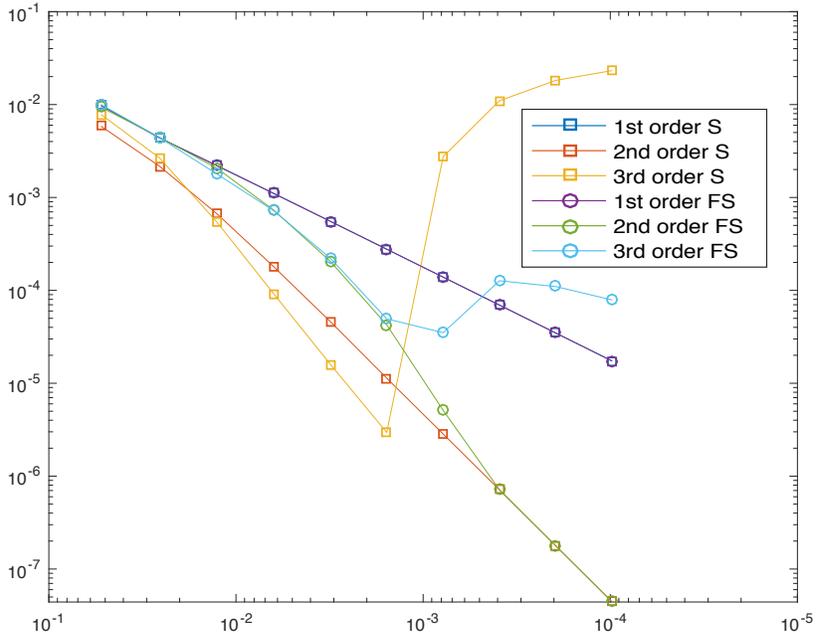}
\caption{Errors in the $L^1$ norm from order 1, 2, 3 schemes when $f = f_1$}\label{fig:F2_d1}
\end{figure}

\begin{figure}
\centering
\includegraphics[scale=0.65]{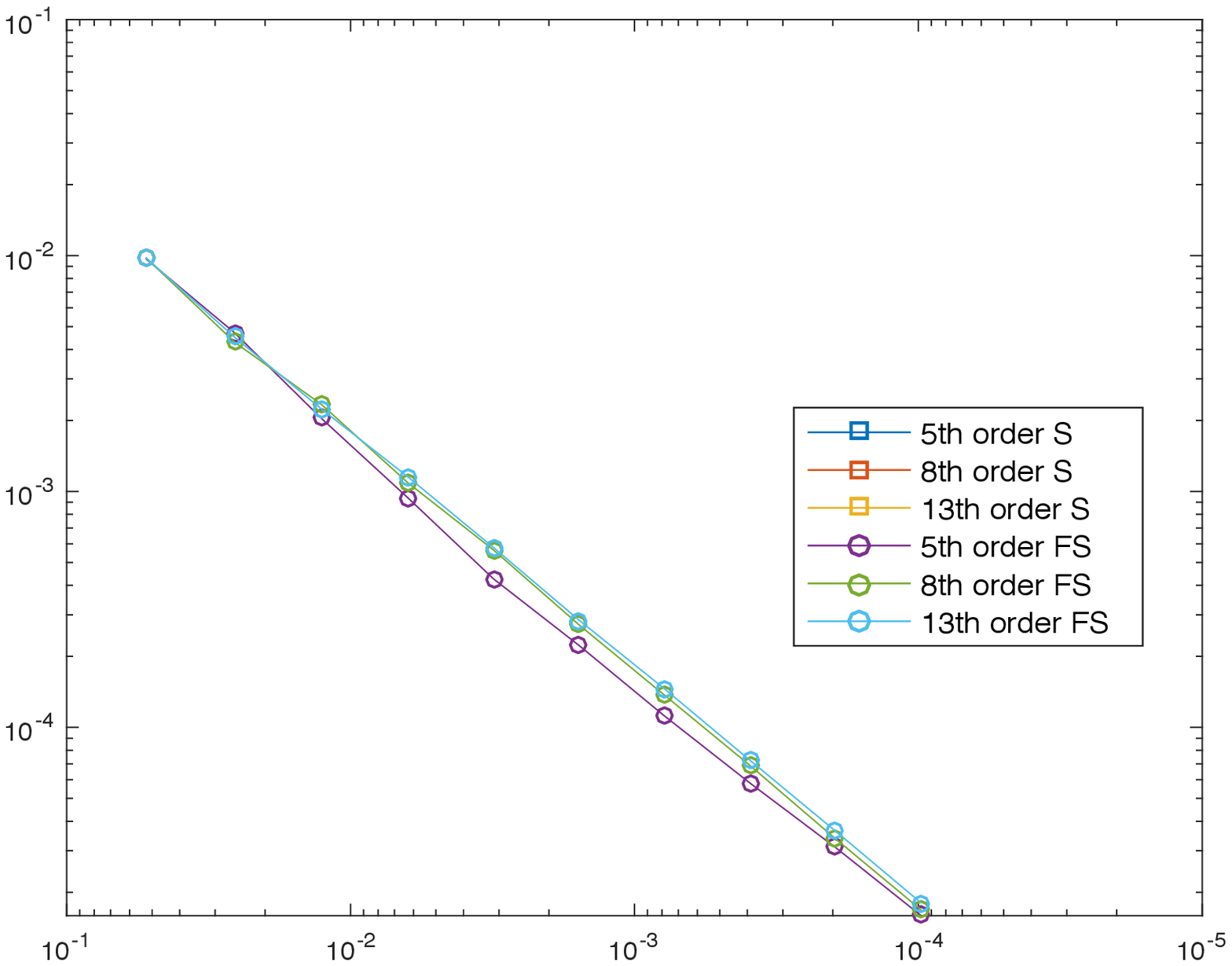}
\caption{Errors in the $L^1$ norm from order 5, 8, 13 schemes when $f = f_1$}\label{fig:F2_d2}
\end{figure}

\begin{figure}
\centering
\includegraphics[scale=0.65]{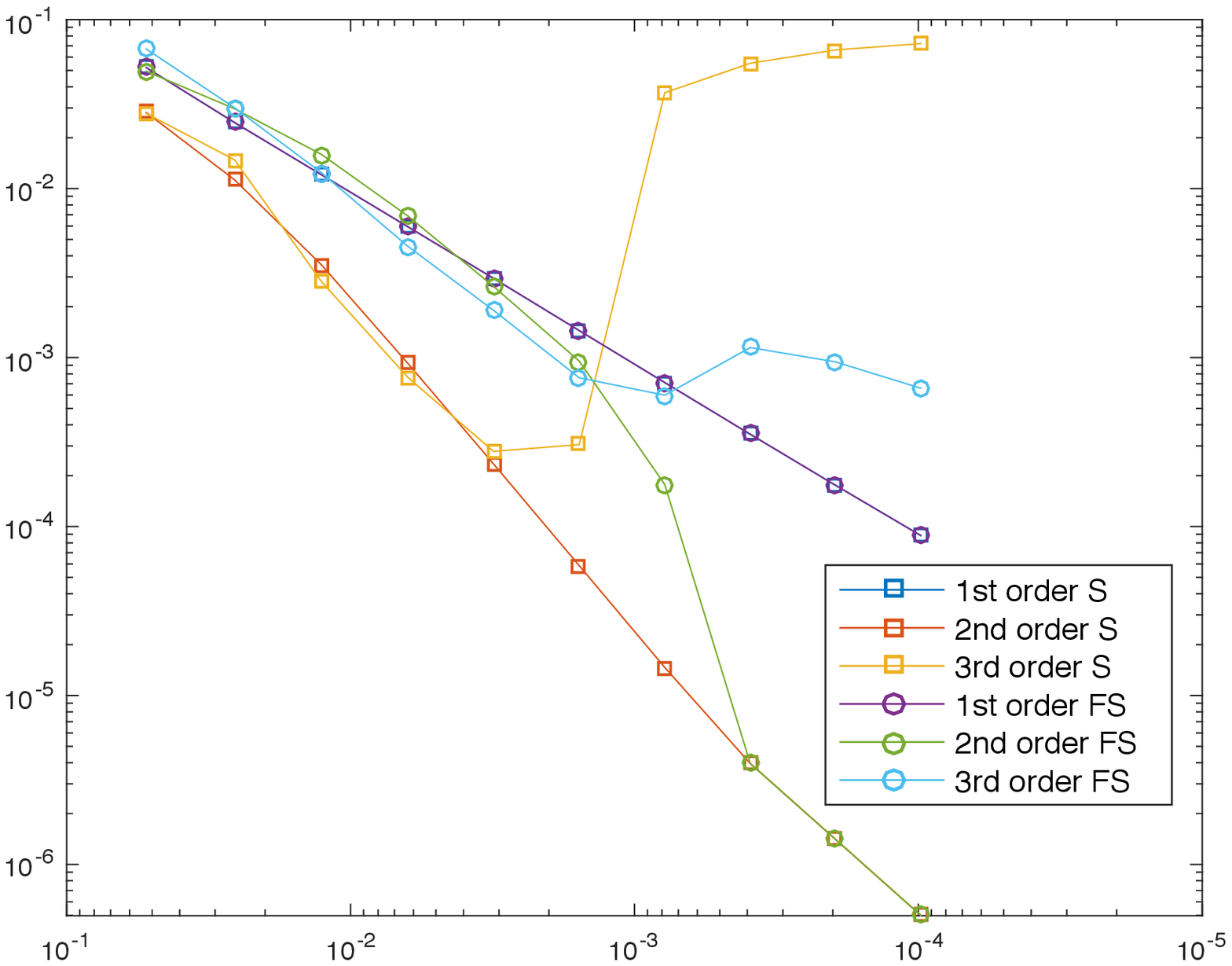}
\caption{Errors in the $L^\infty$ norm from order 1, 2, 3 schemes when $f = f_1$}\label{fig:F2_d3}
\end{figure}

\begin{figure}
\centering
\includegraphics[scale=0.65]{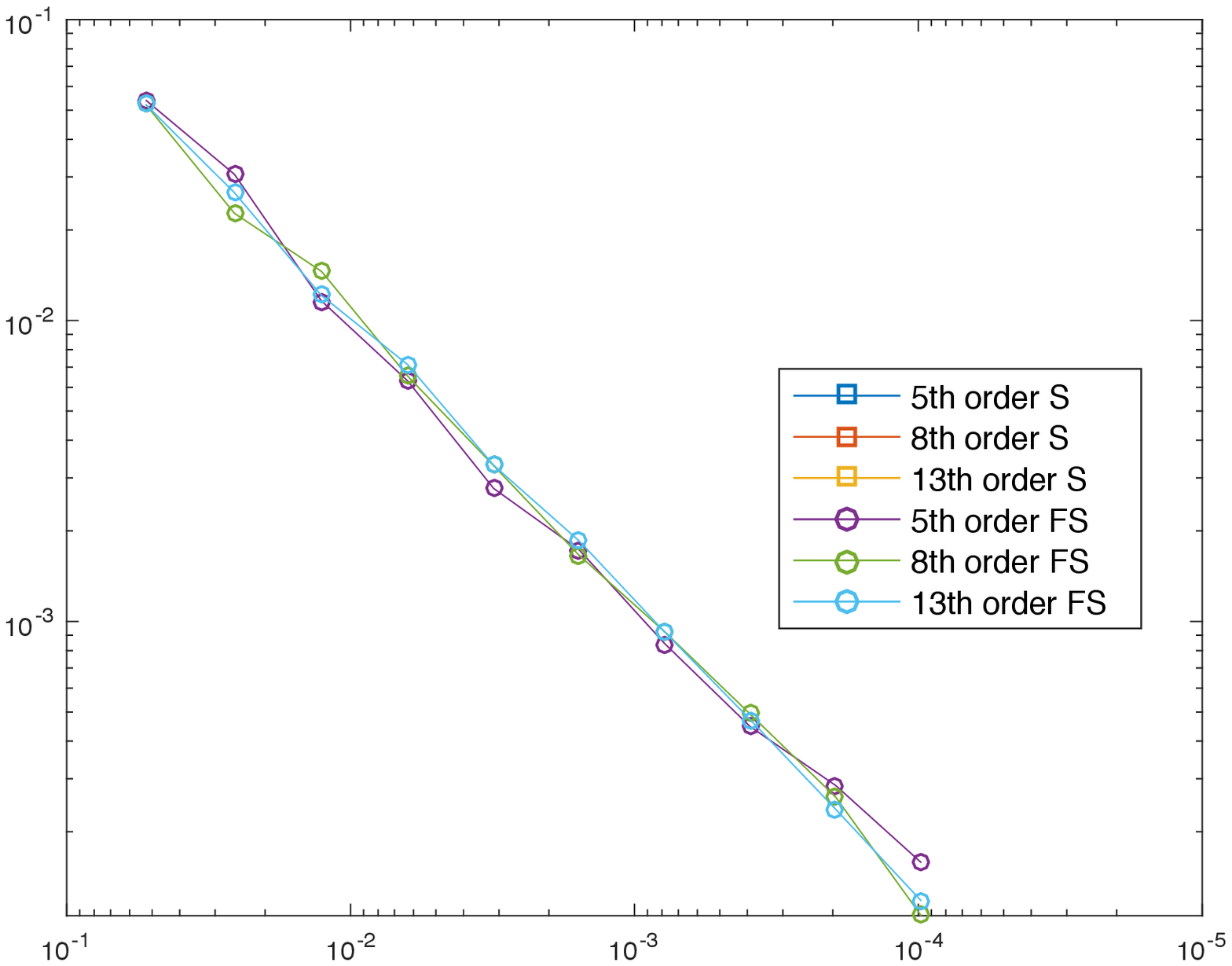}
\caption{Errors in the $L^\infty$ norm from order 5, 8, 13 schemes when $f = f_1$}\label{fig:F2_d4}
\end{figure}

\begin{figure}
\centering
\includegraphics[scale=0.65]{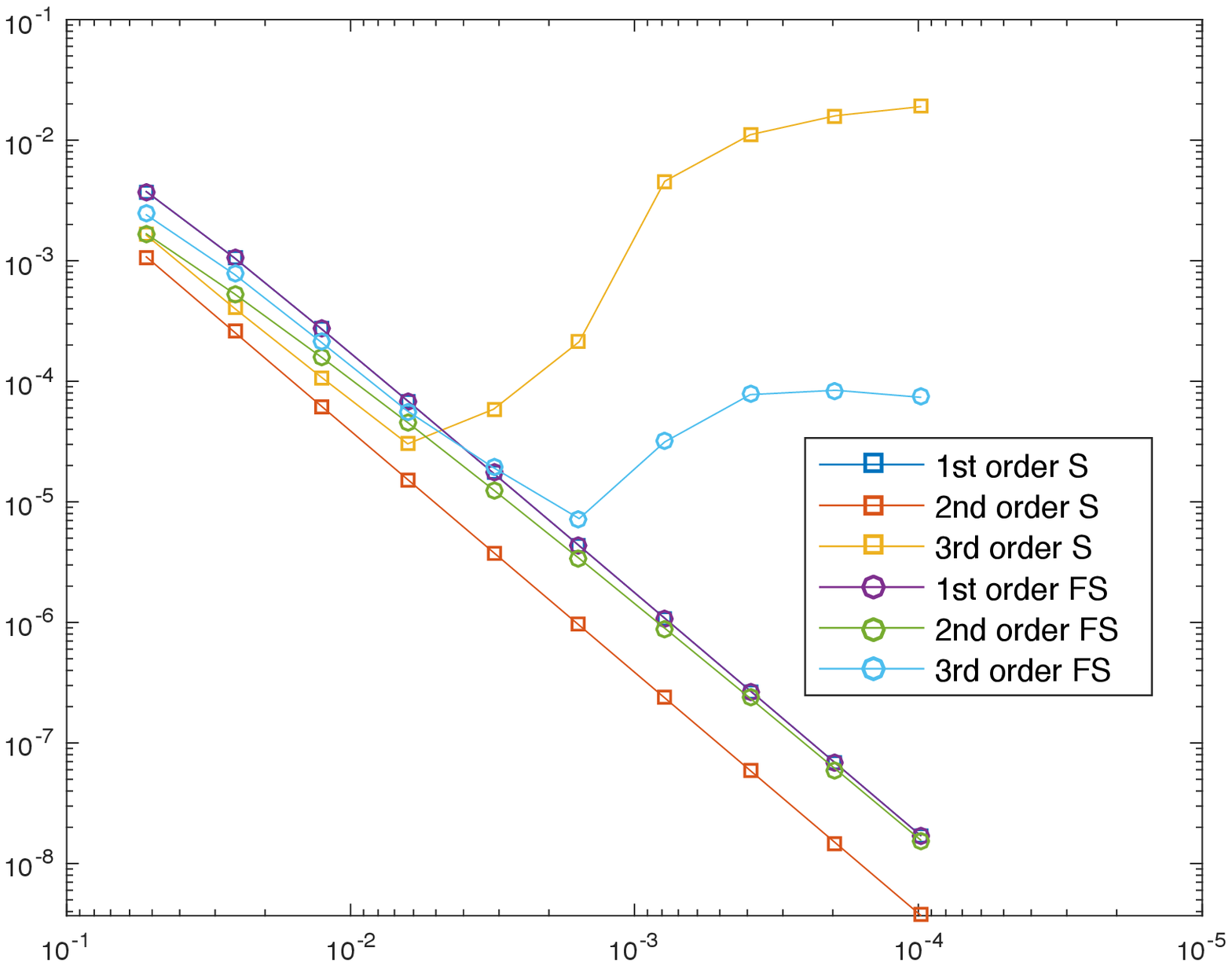}
\caption{Errors in the $L^1$ norm from order 1, 2, 3 schemes when $f = f_2$}\label{fig:F3_d1}
\end{figure}

\begin{figure}
\centering
\includegraphics[scale=0.65]{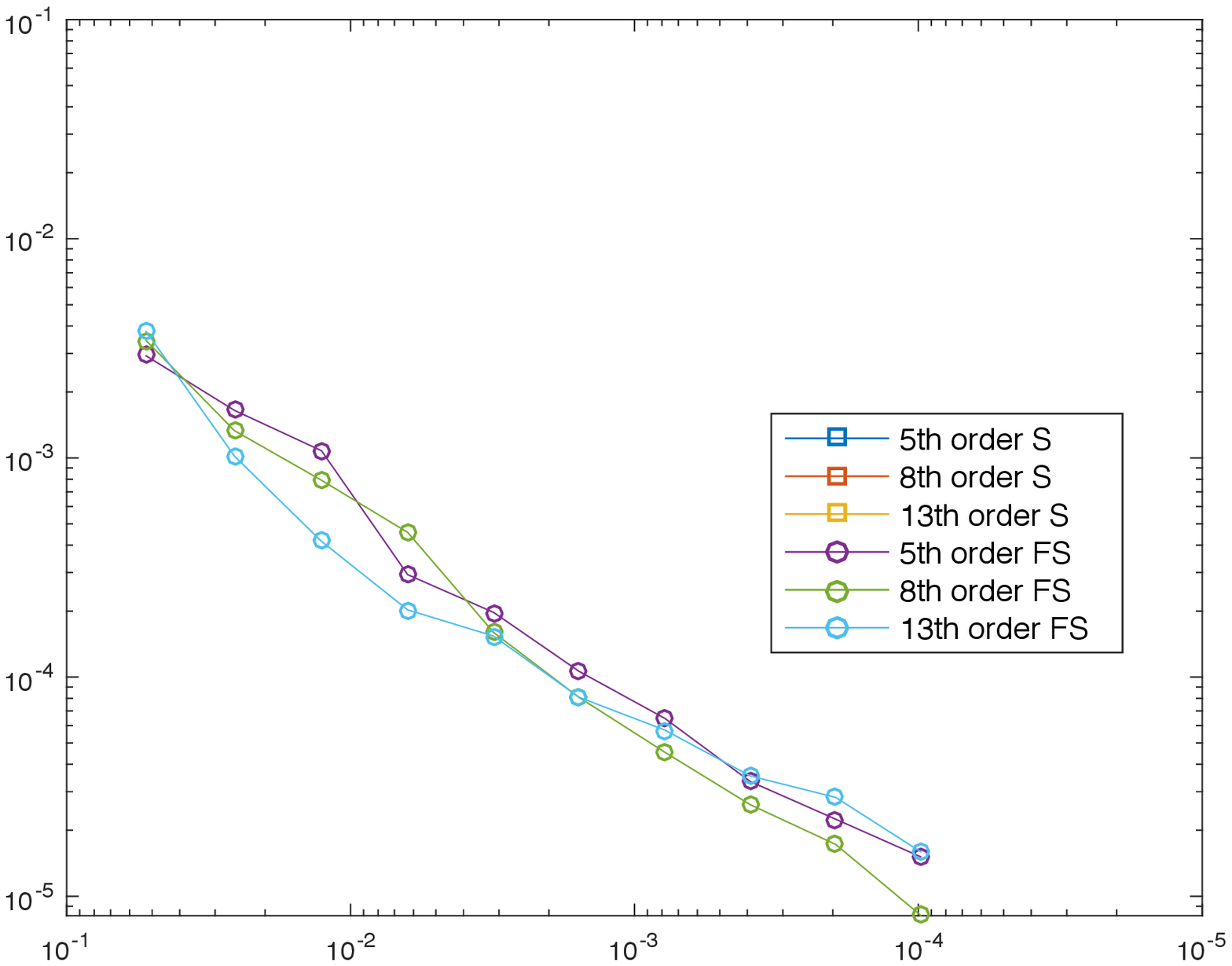}
\caption{Errors in the $L^1$ norm from order 5, 8, 13 schemes when $f = f_2$}\label{fig:F3_d2}
\end{figure}

\begin{figure}
\centering
\includegraphics[scale=0.65]{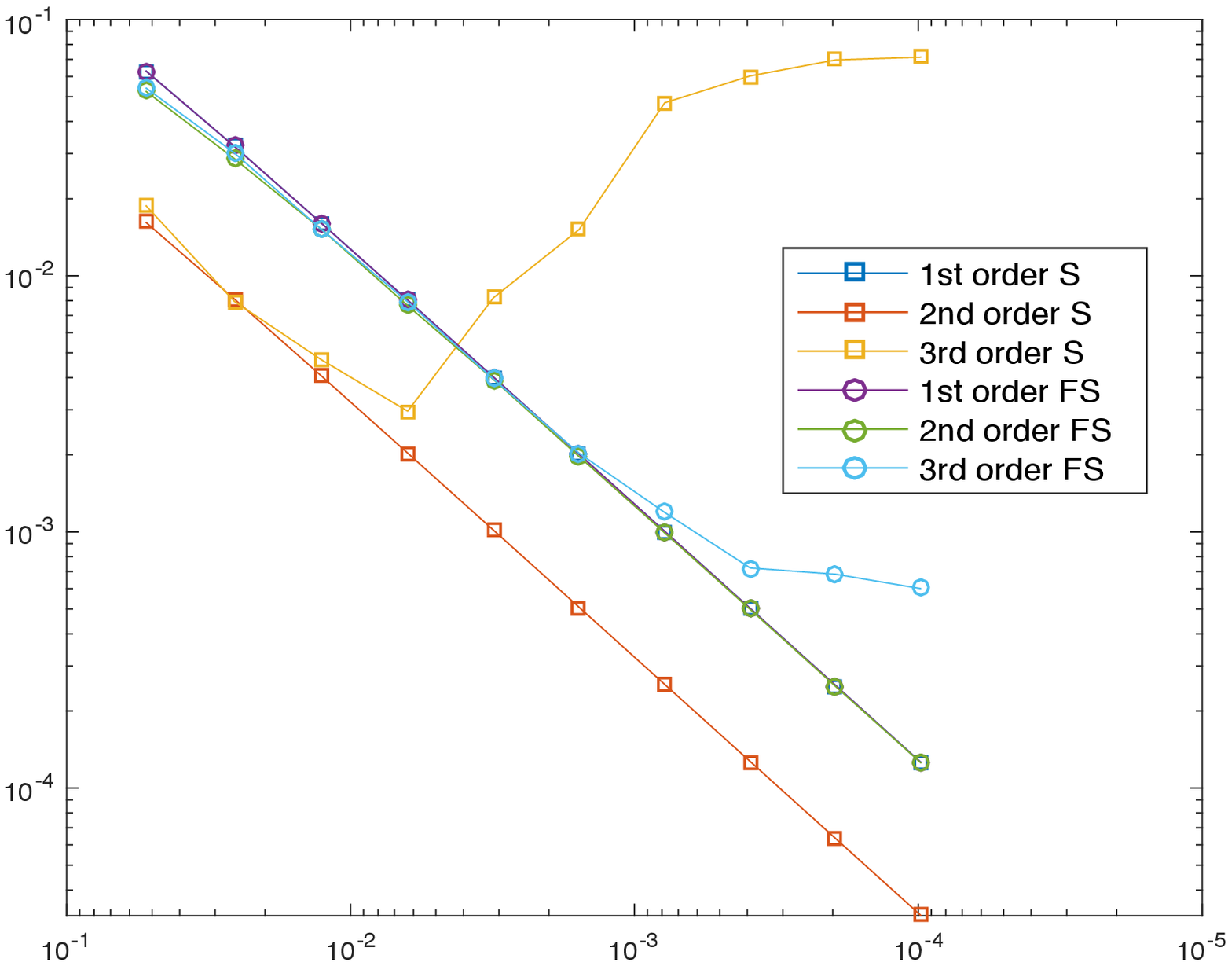}
\caption{Errors in the $L^\infty$ norm from order 1, 2, 3 schemes when $f = f_2$}\label{fig:F3_d3}
\end{figure}

\begin{figure}
\centering
\includegraphics[scale=0.65]{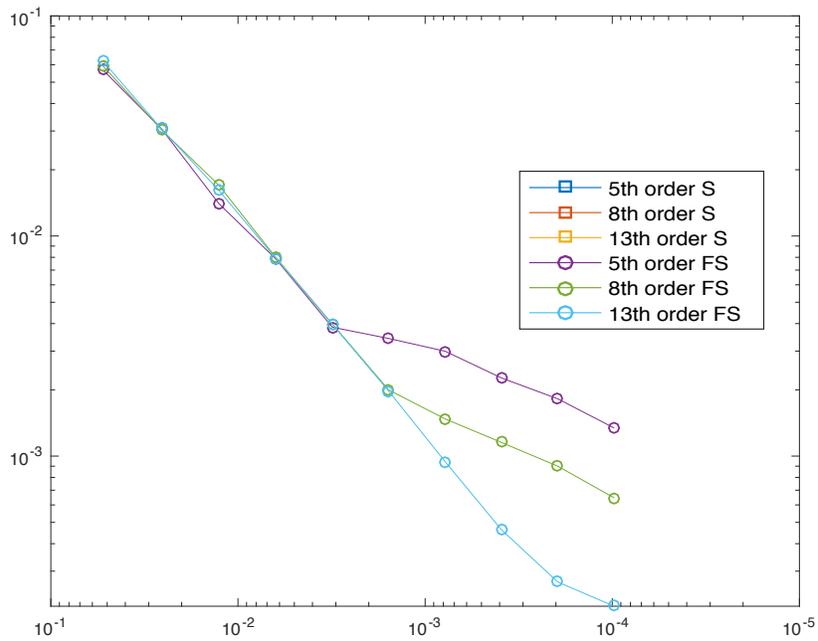}
\caption{Errors in the $L^\infty$ norm from order 5, 8, 13 schemes when $f = f_2$}\label{fig:F3_d4}
\end{figure}

The results from the simulations suggest that the unfiltered $2^{\rm nd}$ order backward difference scheme is convergent with a second order rate.  On the other hand, the backward difference schemes of order higher than two appear to be unstable. In fact, the errors for the unfiltered schemes for order $k=5,8,13$ are so large they are not shown in the figures.
 
Observing the errors from filtered schemes in both the $L^1$ norm and the $L^\infty$ norm, we see that the $2^{\rm nd}$ order filtered scheme also tends to give better accuracy than the $1^{\rm st}$ order one, but other higher order filtered schemes only give comparable accuracies to the $1^{\rm st}$ order scheme. A further investigation shows that high order filtered schemes rely most of the time on the first order scheme in solving \eqref{P1}. This explains why higher order filtered schemes do not produce better accuracy than lower order ones. To give a better idea, we show the fraction of grid points for which the $k^{\rm th}$ order scheme is being used for various mesh sizes and orders when setting $f=f_1$ in Table \ref{f2L1}. Since setting $f = f_2$ gives a similar result, we omit showing the same table of data in this case. This explains why filtering is not successful for higher order schemes. It would be interesting to determine why this is happening and whether it can be improved by using different schemes or a different type of filtering.

\begin{table}[!t]
\centering
\begin{tabular}{|c|c|c|c|c|c|c|}
\hline
Mesh size $h$& $1^{\rm st}$ order& $2^{\rm nd}$ order& $3^{\rm rd}$ order& $5^{\rm th}$ order& $8^{\rm th}$ order & $13^{\rm th}$ order\\
\hline
$3.33\times10^{-2}$&$95.06\%$ &$54.31\%$ &$45.00\%$ &$44.81\%$&$22.69\%$ &$4.88\%$ \\
$6.67\times10^{-3}$&$98.75\%$ &$81.09 \%$   &$81.61\%$ &$75.41\%$&$33.95\%$ &$7.59\%$\\
$1.59\times10^{-3}$&$99.69\%$ &$97.69\%$   &$97.42\%$ &$85.61\%$  &$37.92\%$ &$8.78\%$\\
$3.92\times10^{-5}$&$99.92\%$ &$99.84\%$   &$98.57\%$ &$87.29\%$  &$39.15\%$ &$8.97\%$\\
$9.78\times10^{-6}$&$99.98\%$ &$99.96\%$   &$97.82\%$ &$87.03\%$  &$39.43\%$ &$9.00\%$\\
\hline
\end{tabular}
\caption{Fraction of grid points for which the $k^{\rm th}$ order scheme being used in the filtered schemes.}
\label{f2L1}
\end{table}


\section{Explicit formulas for difference quotients}
\label{sec:diff}

Our proof of the backward difference formulas in Section \ref{sec:bfd} can be extended to more general difference quotients. We present these results in this section.  The first part will present results regarding the coefficients of finite difference quotients for $f'$. The second part will be devoted for generalizing the coefficients of finite difference quotients for $f^{(n)}$ where $n$ is any positive integer. Although we expect some of these coefficients to have appeared in the literature before, we include this section for completeness of the paper. Additionally, we hope to provide different approaches for proving the formulas that may be simpler and more direct.

\subsection{Other finite differences for $f'(x)$}
We begin by introducing the notation for an extension of binomial coefficient.
\begin{definition} \label{nchooseK}
Let $r$ be a real number and $k$ be a positive integer. Define 
$$\binom {r}{0} = 1 \text{ and } \binom {r}{k} = \frac{r\cdot (r-1) \cdots (r - (k-1))}{k!}.$$
\end{definition}

We found that Lemma \ref{BinomialPowerZero} can be used for computing the $k-$order forward difference quotients as well. Compared to backward differences, these quotients simply have opposite signs as stated in Theorem \ref{ForwardDiff} and Corollary \ref{ExplicitForward}.

\begin{theorem}[Forward Difference]
 Let $k$ be a positive integer. Then
$$f'(x) = \frac{1}{h}\sum_{i=1}^{k}c_i\left[f(x+ih) - f(x)\right] + O(h^k),$$
where $$c_i = \frac{(-1)^{i-1}}{i}\binom {k}{i},$$
for $i = 1,2, \dots, n.$
\end{theorem}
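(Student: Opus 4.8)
The plan is to mirror the proof of Theorem~\ref{BackwardDiff} almost verbatim, exploiting the symmetry between forward and backward evaluation points. First I would expand each term $f(x+ih)$ by its Taylor series, writing
\[
\sum_{i=1}^{k}c_i\left[f(x+ih)-f(x)\right] = \sum_{j=1}^{k}\frac{h^j}{j!}f^{(j)}(x)\left[\sum_{i=1}^{k}c_i\cdot i^j\right] + O(h^{k+1}),
\]
which is the exact analogue of \eqref{TaylorExpansion2} with $(-ih)^j$ replaced by $(ih)^j$. Thus the only difference from the backward case is that the internal factor $(-i)^j$ becomes $i^j$; the whole argument reduces to showing that the coefficients $c_i=\frac{(-1)^{i-1}}{i}\binom{k}{i}$ make the $j=1$ bracket equal to $1$ and the $j=2,\dots,k$ brackets vanish.

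Next I would verify these two facts directly. For $j=1$, the bracket is $\sum_{i=1}^{k}c_i\cdot i = \sum_{i=1}^{k}(-1)^{i-1}\binom{k}{i}$, and by the binomial theorem $\sum_{i=0}^{k}(-1)^{i}\binom{k}{i}=0$, so $\sum_{i=1}^{k}(-1)^{i-1}\binom{k}{i}=\binom{k}{0}=1$, exactly as in the backward proof. For $2\le j\le k$, the bracket becomes
\[
\sum_{i=1}^{k}c_i\cdot i^j = \sum_{i=1}^{k}(-1)^{i-1} i^{j-1}\binom{k}{i} = -\sum_{i=0}^{k}(-1)^{i} i^{j-1}\binom{k}{i},
\]
where the $i=0$ term can be added freely since $i^{j-1}=0$ there for $j\ge 2$. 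Applying Lemma~\ref{BinomialPowerZero} with $m=0$ and $n=k$ (valid because $j-1$ ranges over $1,\dots,k-1$) shows this sum is zero. Hence only the first-derivative term survives, giving $\sum_{i=1}^{k}c_i[f(x+ih)-f(x)] = hf'(x)+O(h^{k+1})$, and dividing by $h$ yields the stated formula with remainder $O(h^k)$.

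There is really no serious obstacle here: the proof is a sign-bookkeeping exercise, and the single point that needs care is the reappearance of the extra factor $(-1)^j$ that was present in the backward case but is absent here. In the backward computation the identity $\sum_{i=1}^{k}c_i(-i)^j$ produced an overall $(-1)^j$ that was then discarded because the sum vanished anyway; in the forward case the evaluation points are $x+ih$, so the factor $(-1)^j$ simply never appears, and the coefficients carry the sign $(-1)^{i-1}$ rather than $(-1)^{i}$. I would therefore emphasize only this adjustment and otherwise refer back to Theorem~\ref{BackwardDiff}, noting that Lemma~\ref{BinomialPowerZero} applies identically with $m=0$. An analogous corollary rewriting the quotient in the nonincremental form $\frac{1}{h}\sum_{i=0}^{k}d_i f(x+ih)$ with $d_0=-\left(1+\tfrac12+\cdots+\tfrac1k\right)$ would then follow exactly as in Corollary~\ref{ExplicitBackward}.
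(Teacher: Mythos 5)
Your proposal is correct and is exactly the argument the paper intends: the paper omits this proof precisely because it ``can be proceeded in the same way'' as Theorem~\ref{BackwardDiff}, i.e.\ Taylor expansion, the binomial theorem for the $j=1$ bracket, and Lemma~\ref{BinomialPowerZero} with $m=0$, $n=k$ for $j=2,\dots,k$, which is what you do. Your sign bookkeeping (the coefficients carrying $(-1)^{i-1}$ in place of $(-1)^{i}$ because the factor $(-i)^j$ becomes $i^j$) is the only substantive adjustment, and you handle it correctly.
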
\label{ForwardDiff}

\begin{corollary}\label{ExplicitForward}
Let $k$ be a positive integer. Then
$$f'(x) = \frac{1}{h}\sum_{i=0}^{k}d_i\cdot f(x+ih) + O(h^k),$$
where $$d_0 = -1 - \frac{1}{2} - \cdots - \frac{1}{k}, \text{ and}$$
$$d_i = \frac{(-1)^{i-1}}{i}\binom {k}{i},$$
for $i = 1,2, \dots, n.$
\end{corollary}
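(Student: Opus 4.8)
The plan is to mirror the proof of Corollary~\ref{ExplicitBackward} almost verbatim, exploiting the sign symmetry between forward and backward differences. The forward-difference theorem (Theorem~\ref{ForwardDiff}) has just established that
\[
f'(x) = \frac{1}{h}\sum_{i=1}^{k} c_i\left[f(x+ih) - f(x)\right] + O(h^k),
\qquad c_i = \frac{(-1)^{i-1}}{i}\binom{k}{i}.
\]
Regrouping this sum so that each term is a single evaluation of $f$ rather than a difference, the coefficients $d_i$ of $f(x+ih)$ for $i=1,\dots,k$ are unchanged, namely $d_i = c_i = \frac{(-1)^{i-1}}{i}\binom{k}{i}$, since the only term containing $f(x)$ is the $-c_i f(x)$ contribution, which all collapse onto $d_0$.

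The main computation, and the only real content of the proof, is therefore to evaluate $d_0$. Since the terms $-c_i f(x)$ are the sole source of the $f(x)$ coefficient, I would write $d_0 = -\sum_{i=1}^k c_i = -\sum_{i=1}^k \frac{(-1)^{i-1}}{i}\binom{k}{i}$. Comparing with the backward case, note that the backward coefficients were $\frac{(-1)^i}{i}\binom{k}{i}$, whose negated sum evaluated to $+\bigl(1+\tfrac12+\cdots+\tfrac1k\bigr)$ via the integral identity in Corollary~\ref{ExplicitBackward}. Here the sign on $(-1)^{i-1}$ is flipped relative to the backward case, so the same integral argument runs through with an overall minus sign. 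Concretely, I would reproduce the chain
\[
\sum_{i=1}^{k} \frac{(-1)^{i-1}}{i}\binom{k}{i}
= \int_0^1 \frac{1-(1-x)^k}{x}\,dx
= \int_0^1 \bigl(1 + (1-x) + \cdots + (1-x)^{k-1}\bigr)\,dx
= 1 + \frac12 + \cdots + \frac1k,
\]
so that $d_0 = -\bigl(1+\tfrac12+\cdots+\tfrac1k\bigr)$, exactly the claimed formula.

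I do not expect any genuine obstacle here, since the statement is a sign-flipped transcription of the already-proved backward result and Theorem~\ref{ForwardDiff} supplies the $c_i$ directly. The only point requiring a moment's care is bookkeeping the sign: one must verify that the $(-1)^{i-1}$ in the forward coefficients (as opposed to the $(-1)^i$ in the backward coefficients) is precisely what turns the positive harmonic sum $1+\tfrac12+\cdots+\tfrac1k$ into its negative, so that $d_0$ picks up the leading minus sign displayed in the corollary. Everything else is the routine regrouping that was already carried out in Corollary~\ref{ExplicitBackward}.
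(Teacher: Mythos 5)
Your proposal is correct and is exactly the argument the paper intends: the paper omits this proof, stating it proceeds in the same way as Corollary \ref{ExplicitBackward}, and your regrouping plus the integral evaluation of $-\sum_{i=1}^k \frac{(-1)^{i-1}}{i}\binom{k}{i}$ reproduces that argument with the correct sign flip giving $d_0 = -\bigl(1+\tfrac12+\cdots+\tfrac1k\bigr)$.
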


The proofs for both of the above theorem and corollary will be omitted as they can be proceeded in the same way as the proofs for Theorem \ref{BackwardDiff} and Corollary \ref{ExplicitBackward}, respectively.\\

Another finite difference quotient that is often implemented is centered difference quotient. With Lemma \ref{BinomialPowerZero}, we can also generalize centered difference quotients as shown in Theorem \ref{CenteredDiff}.
	
\begin{theorem}[Centered Difference]
Let $m$ and $n$ be a positive integers. Then
$$f'(x) =\frac{1}{h}\sum_{i=-m}^{n}c_i\left[f(x+ih) - f(x)\right] + O(h^{n+m}),$$
where $c_0 = 0,$ and
$$c_i = (-1)^{i-1}\frac{\binom {n+m}{i+m}}{i\binom {n+m}{m}},$$ 
for $i = -m, -m + 1, \dots, -1, 1, \dots, n.$
\end{theorem}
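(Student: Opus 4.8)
The plan is to follow the template of the proof of Theorem~\ref{BackwardDiff}. Write $N = n+m$ and assume $f \in C^{N+1}$, paralleling the smoothness hypothesis there. First I would Taylor expand each shifted value as $f(x+ih) - f(x) = \sum_{j=1}^{N}\frac{(ih)^j}{j!}f^{(j)}(x) + O(h^{N+1})$, substitute into $\frac{1}{h}\sum_{i=-m}^{n}c_i[f(x+ih)-f(x)]$, and interchange the two sums. Because every $c_i$ is a fixed constant, the pooled remainder contributes only $O(h^{N})$, and the expression collapses to $\sum_{j=1}^{N}\frac{h^{j-1}}{j!}f^{(j)}(x)\,S_j + O(h^{N})$, where $S_j := \sum_{i=-m}^{n}c_i\,i^j$ and the $i=0$ term is absent since $c_0 = 0$. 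Thus the theorem reduces to the two moment identities $S_1 = 1$ and $S_j = 0$ for $2 \le j \le N$.

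To evaluate the moments I would use the index shift $\ell = i+m$, under which $i$ ranges over $\{0,\dots,N\}\setminus\{m\}$ and $c_i\,i^j = \frac{(-1)^{\ell-m-1}(\ell-m)^{j-1}\binom{N}{\ell}}{\binom{N}{m}}$. This rewrites $S_j$ as a fixed constant times $\sum_{\ell\neq m}(-1)^{\ell}(\ell-m)^{j-1}\binom{N}{\ell}$, which is exactly the shape controlled by Lemma~\ref{BinomialPowerZero}. For $2 \le j \le N$ the excluded index $\ell = m$ contributes $(\ell-m)^{j-1}=0$, so it may be reinstated, and applying Lemma~\ref{BinomialPowerZero} with its integer parameter taken to be $-m$, its upper index $N$, and its power $k = j-1$ (legitimate since $1 \le j-1 \le N-1$) gives $S_j = 0$.

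The single case the lemma does not reach is $j=1$, i.e. power $k=0$, and this is where the only genuine care is needed. Here $S_1$ is a fixed constant times $\sum_{\ell\neq m}(-1)^{\ell}\binom{N}{\ell}$; since the full alternating binomial sum $\sum_{\ell=0}^{N}(-1)^{\ell}\binom{N}{\ell} = (1-1)^N$ vanishes (using $N \ge 2$, which holds as $m,n \ge 1$), deleting the $\ell=m$ term leaves $-(-1)^m\binom{N}{m}$, and carrying the sign prefactor $(-1)^{-m-1}$ through gives $S_1 = 1$. Feeding $S_1=1$ and $S_j=0$ back into the collapsed expansion yields $\frac{1}{h}\sum_{i=-m}^{n}c_i[f(x+ih)-f(x)] = f'(x) + O(h^{N})$, which rearranges to the claim. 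This centered formula is the two-sided analogue of Theorem~\ref{BackwardDiff}, and the main obstacle is purely bookkeeping: tracking the sign factors correctly through the shift $\ell=i+m$ and isolating the $j=1$ moment, since Lemma~\ref{BinomialPowerZero} supplies the vanishing moments only for $j \ge 2$.
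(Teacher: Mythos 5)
Your proposal is correct and follows essentially the same route as the paper's proof: Taylor expand, reduce to the moment identities $\sum_i c_i i = 1$ and $\sum_i c_i i^j = 0$ for $2 \le j \le n+m$, verify the first by the vanishing of the full alternating binomial sum, and verify the rest via Lemma \ref{BinomialPowerZero} after the index shift $\ell = i+m$. Your sign bookkeeping through the shift is in fact slightly more careful than the paper's.
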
\label{CenteredDiff}

\begin{proof}
By using a Taylor series expansion, we obtain
\begin{equation}\label{TalorExpansion3}
\sum_{i=-m}^{n}c_{i}\left[\sum_{j=1}^{n+m}\frac{(ih)^j}{j!} f^{(j)}(x)\right]  + O(h^{n+m+1})= \sum_{i=-m}^{n}c_{i}\left[f(x+ih) - f(x)\right].
\end{equation}
We claim that $c_0 = 0$ and
$$c_i = (-1)^{i-1}\frac{\binom {n+m}{i+m}}{i\binom {n+m}{m}},$$ for $i \neq 0$, reduce the left hand side of equation \eqref{TalorExpansion3} to $hf'(x) + O(h^{k+1}).$\\
Rearranging the left hand side of equation \eqref{TalorExpansion3}, we deduce
$$\sum_{i=-m}^{n}c_{i}\left[\sum_{j=1}^{n+m}\frac{(ih)^j}{j!} f^{(j)}(x)\right]  + O(h^{n+m+1}) = \sum_{j=1}^{k}\frac{h^j}{j!}f^{(j)}(x) \left[\sum_{i=-m}^{n}c_{i}\cdot i^j \right]  + O(h^{n+m+1}).$$
By binomial expansion, 
\begin{eqnarray*}
\sum_{i=-m}^{n}c_{i}\cdot i  &=& \frac{1}{\binom {n+m}{m}} \left[ \sum_{i = -m}^{n}\binom {n+m}{i+m}(-1)^{i-1} + \binom{n+m}{m} \right]\\
&=& \frac{1}{\binom {n+m}{m}} \left[ \sum_{i = 0}^{n+m}\binom {n+m}{i}(-1)^{i} + \binom{n+m}{m} \right] \ \ \ \ = 1.
\end{eqnarray*}
By Lemma \ref{BinomialPowerZero}, for $j = 2, \dots, n+m,$
\begin{eqnarray*}
\sum_{i=-m}^{n}c_{i}\cdot i ^j &=& \frac{1}{\binom {n+m}{m}} \left[ \sum_{i = -m}^{n}i^{j-1}\binom {n+m}{i+m}(-1)^{i-1} \right] \\
&=& \frac{1}{\binom {n+m}{m}} \left[ \sum_{i = 0}^{n+m}(i-m)^{j-1}\binom {n+m}{i}(-1)^{i} \right]\ \ \ \ \ = 0.
\end{eqnarray*}
Thus,
$$\sum_{i=-m}^{n}c_{i}\left[\sum_{j=1}^{n+m}\frac{(ih)^j}{j!} f^{(j)}(x)\right]  + O(h^{n+m+1}) = hf'(x) + O(h^{n+m+1}),$$ and the desired result follows.
\end{proof}

The next finite difference quotient that will be introduced in Theorem \ref{ArithmeticType1} uses arithmetic progression differences in computing $f'(x).$ Essential lemmas for proving its coefficients are given below.

\begin{lemma}\label{FractionalSum1}
For any given real number $r$ and positive integer $p$,
$$\sum_{i = 0}^{p}\binom{-r}{i} \binom{r + p}{p-i} = 1.$$
\end{lemma}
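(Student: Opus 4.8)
The plan is to recognize Lemma \ref{FractionalSum1} as a special case of the generalized Vandermonde convolution identity. Specifically, I would first establish that for all real numbers $a,b$ and every nonnegative integer $p$,
\begin{equation}\label{eq:vandermonde}
\sum_{i=0}^{p}\binom{a}{i}\binom{b}{p-i} = \binom{a+b}{p},
\end{equation}
where the binomial coefficients are understood in the sense of Definition \ref{nchooseK}. Granting \eqref{eq:vandermonde}, the lemma follows instantly by setting $a=-r$ and $b=r+p$: the right-hand side becomes $\binom{-r+(r+p)}{p}=\binom{p}{p}=1$, since $\binom{p}{p}=\frac{p(p-1)\cdots 1}{p!}=1$.

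The heart of the argument is therefore proving \eqref{eq:vandermonde} for real upper indices, and the cleanest route is a polynomial-identity argument. Fix $p$ and set $P(a,b)=\sum_{i=0}^{p}\binom{a}{i}\binom{b}{p-i}-\binom{a+b}{p}$. Because each $\binom{a}{i}$ is, by Definition \ref{nchooseK}, a polynomial in $a$ of degree $i$ (and likewise in $b$), the function $P$ is a genuine polynomial in the two variables $a$ and $b$. For nonnegative integer values $a,b\in\No$, identity \eqref{eq:vandermonde} is the classical Vandermonde identity, which has the familiar combinatorial proof: choosing $p$ elements from a pool of $a+b$ elements split into groups of sizes $a$ and $b$ can be accomplished by choosing $i$ from the first group and $p-i$ from the second and summing over $i$. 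Hence $P$ vanishes on all of $\No\times\No$.

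It remains to upgrade vanishing on $\No\times\No$ to vanishing identically, which is the one point requiring a little care. I would argue in two steps: for each fixed $b_0\in\No$ the single-variable polynomial $a\mapsto P(a,b_0)$ has infinitely many roots and is therefore the zero polynomial, so $P(a,b_0)=0$ for every real $a$; then, fixing any real $a_0$, the polynomial $b\mapsto P(a_0,b)$ vanishes on all of $\No$ and hence is identically zero as well. Thus $P\equiv 0$, establishing \eqref{eq:vandermonde} for all real $a,b$ and completing the proof. An equivalent and perhaps slicker alternative would be to read off \eqref{eq:vandermonde} as the coefficient of $x^p$ in the formal power series identity $(1+x)^a(1+x)^b=(1+x)^{a+b}$, using the binomial series $(1+x)^a=\sum_{i\ge0}\binom{a}{i}x^i$. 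In either approach I expect the only real obstacle is justifying the passage from integer to real upper indices, which the polynomial argument handles transparently.
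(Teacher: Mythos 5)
Your proposal is correct, but your primary argument takes a genuinely different route from the paper. The paper proves the identity directly by writing $(x+1)^p = (x+1)^{-r}(x+1)^{r+p}$, expanding each factor as a (generalized) binomial series, and comparing the coefficient of $x^p$ on both sides --- i.e., exactly the ``slicker alternative'' you mention at the end. Your main argument instead proves the full Vandermonde convolution $\sum_{i=0}^p \binom{a}{i}\binom{b}{p-i}=\binom{a+b}{p}$ for real $a,b$ by observing that both sides are polynomials in $(a,b)$, that they agree on $\No\times\No$ by the classical combinatorial identity, and that a polynomial vanishing on $\No\times\No$ vanishes identically (handled correctly by your two-step one-variable-at-a-time argument); the lemma then follows by specializing $a=-r$, $b=r+p$. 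What each approach buys: the paper's generating-function proof is shorter but tacitly relies on the binomial series for real exponents and on the multiplicativity $(1+x)^{a}(1+x)^{b}=(1+x)^{a+b}$ at the level of power series, which itself requires a word of justification; your polynomial-extension argument is purely algebraic, needs no convergence or formal-series machinery, and delivers the more general identity $\binom{a+b}{p}$ rather than only the special case where the answer is $1$. Both are complete proofs.
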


\begin{proof}
Writing
$$(x+1)^p = (x+1)^{-r} (x+1)^{(r + p) } = \left[ \sum_{i = 0}^{\infty} \binom{-r}{i} x^i\right] \left[ \sum_{j = 0}^{\infty} \binom{r+p}{j} x^j\right],$$
one can compare the coefficient of $x^p$ on both sides of the equation and yield
$$1 = \sum_{i = 0}^{p}\binom{-r}{i} \binom{r+ p}{p-i}.$$
\end{proof}

\begin{corollary}\label{G0}
For any given real number $r$ and positive integer $p$, the polynomial
$$G_0(\lambda) = \sum_{i = 0}^{p}\binom{-r + \lambda}{i}\binom{r + p - \lambda}{p-i} = 1.$$
\end{corollary}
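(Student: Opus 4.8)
The plan is to observe that the sum defining $G_0(\lambda)$ is nothing more than the sum in Lemma \ref{FractionalSum1} after a shift of the free parameter, and then to invoke that lemma directly. The crucial feature of Lemma \ref{FractionalSum1} is that it holds for \emph{every} real number $r$, not merely for integers; this is exactly what makes the shift legitimate. Concretely, I would set $s := r - \lambda$, which is a real number for every real $\lambda$, and note that $-s = -r + \lambda$ and $s + p = r + p - \lambda$. Under Definition \ref{nchooseK} the generalized binomial coefficients then satisfy $\binom{-s}{i} = \binom{-r+\lambda}{i}$ and $\binom{s+p}{p-i} = \binom{r+p-\lambda}{p-i}$, so that $G_0(\lambda) = \sum_{i=0}^{p}\binom{-s}{i}\binom{s+p}{p-i}$, which equals $1$ by Lemma \ref{FractionalSum1} applied with $s$ in place of $r$.

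Before carrying out the substitution I would first justify the word \emph{polynomial} in the statement: by Definition \ref{nchooseK}, $\binom{-r+\lambda}{i}$ is a polynomial in $\lambda$ of degree $i$ and $\binom{r+p-\lambda}{p-i}$ is a polynomial of degree $p-i$, so each summand is a polynomial of degree $p$ and $G_0$ is genuinely a polynomial in $\lambda$ of degree at most $p$. The substitution above then shows $G_0(\lambda)=1$ at every real value of $\lambda$; since a polynomial taking the value $1$ at infinitely many points must be the constant polynomial $1$, the claim $G_0\equiv 1$ follows. (If one prefers, the per-point evaluation already suffices, since the assertion is that the displayed sum equals $1$ for each real $\lambda$.)

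There is no genuine obstacle here, only one point that must be checked carefully: that Lemma \ref{FractionalSum1} really does hold for arbitrary real $r$, so that replacing $r$ by $r-\lambda$ is valid. This is guaranteed by the generating-function proof of the lemma, which expands $(x+1)^{-r}$ and $(x+1)^{r+p}$ as the generalized binomial series $\sum_{i\ge 0}\binom{-r}{i}x^i$ and $\sum_{j\ge 0}\binom{r+p}{j}x^j$ for any real exponent and compares the coefficient of $x^p$. Once this is in hand the corollary is immediate and requires no further computation; indeed the entire content is the recognition that the parameter in Lemma \ref{FractionalSum1} is free to be translated by $\lambda$.
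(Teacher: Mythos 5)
Your proof is correct and is exactly the argument the paper intends: the corollary is stated without proof precisely because it follows from Lemma \ref{FractionalSum1} by replacing $r$ with $r-\lambda$, which is legitimate since that lemma holds for every real parameter. Your additional remark that $G_0$ is a polynomial in $\lambda$ of degree at most $p$ is a harmless and accurate bonus.
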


\begin{lemma}\label{zeroCoefficient2}
For any given real number $r$ and positive integer $p,$\\
define $s_{r}^{[0]}(x)=x^r(x+1)^{-r}$, and for $k\geq 1,$
$$s_{r}^{[k]}(x)=x\frac{d\left[s_{r}^{[k-1]}(x) \right]}{dx}.$$
Then for $k = 1, 2, \dots, p$,
$$Q_{r,p}^{[k]}(x)=x^{-r} \cdot (x+1)^{r +p} \cdot  s_{r}^{[k]}(x)$$
is a polynomial of degree less than $p.$ 
\end{lemma}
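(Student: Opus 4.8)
The plan is to show that repeatedly applying the operator $x\frac{d}{dx}$ to $s_{r}^{[0]}(x)=x^{r}(x+1)^{-r}$ preserves a simple multiplicative structure, and then to read off polynomiality and the degree bound for $Q_{r,p}^{[k]}$ by cancelling the fractional powers against the prefactor $x^{-r}(x+1)^{r+p}$. First I would prove by induction on $k$ that $s_{r}^{[k]}$ admits the factored form
\[
s_{r}^{[k]}(x) = x^{r}(x+1)^{-r-k}\,P_k(x),
\]
where $P_k$ is an ordinary polynomial. The base case is $P_0=1$. For the inductive step, assuming the formula for $s_{r}^{[k-1]}$, I apply the product rule to $x\frac{d}{dx}\bigl[x^{r}(x+1)^{-r-(k-1)}P_{k-1}\bigr]$ and factor out the common power $x^{r}(x+1)^{-r-k}$; the three resulting terms collapse into the recursion
\[
P_k(x) = r(x+1)P_{k-1}(x) + (1-r-k)\,x\,P_{k-1}(x) + x(x+1)P_{k-1}'(x),
\]
which in particular shows that $P_k$ is again a polynomial.

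Next I would track the degree using this recursion. Each of the three terms raises the degree of $P_{k-1}$ by at most one, so $\deg P_k \le \deg P_{k-1}+1$. Computing the base case directly gives $P_1 = r(x+1) + (1-r-1)x = r$, so $\deg P_1 \le 0$; by induction $\deg P_k \le k-1$ for every $k\ge 1$. This base case is the only subtle point, and the one I would flag as the crux: the naive bound would give $\deg P_1 \le 1$, but the $x$-linear contributions cancel exactly, dropping the degree to $0$, and it is precisely this drop that propagates the sharp bound $k-1$ rather than $k$.

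Finally I substitute the factored form into the definition of $Q_{r,p}^{[k]}$. The powers of $x$ cancel, $x^{-r}\cdot x^{r}=1$, and the powers of $(x+1)$ combine, $(x+1)^{r+p}\cdot(x+1)^{-r-k}=(x+1)^{p-k}$, leaving
\[
Q_{r,p}^{[k]}(x) = (x+1)^{p-k}\,P_k(x).
\]
Since $k\le p$, the exponent $p-k$ is a nonnegative integer, so $(x+1)^{p-k}$ is a genuine polynomial, whence $Q_{r,p}^{[k]}$ is a polynomial; its degree is at most $(p-k)+(k-1)=p-1<p$, which is exactly the claim. Beyond identifying the correct ansatz $s_{r}^{[k]}=x^{r}(x+1)^{-r-k}P_k$, the argument is a routine induction, with the only real care needed in the sharp degree count described above.
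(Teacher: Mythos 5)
Your proof is correct and is essentially the paper's own argument: the same inductive ansatz $s_{r}^{[k]}(x)=x^{r}(x+1)^{-(r+k)}P_k(x)$ with $P_k$ a polynomial of degree at most $k-1$, followed by the same cancellation of $x^{r}$ and combination of the $(x+1)$ powers to get $Q_{r,p}^{[k]}=(x+1)^{p-k}P_k$ of degree at most $p-1$. The only cosmetic difference is that you start the induction at $P_0=1$ and flag the degree drop in computing $P_1=r$, whereas the paper takes $k=1$ as the base case with $g\equiv r$ already constant; the content is identical.
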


\begin{proof}
We will show, by mathematical induction, that for all $k\geq 1,$ $$s_{r}^{[k]}(x)=g(x)\cdot x^r \cdot (x+1)^{-(r+k)},$$ where $g(x)$ is a polynomial of degree less than $k$.\\
It is true for $k =1$ because $$s_{r}^{[1]}(x)=r\cdot x^r\cdot(x+1)^{-(r+1)}.$$
Let's assume that this is true for some positive integer $l.$ Then we will have that
\begin{eqnarray*}
s_{r}^{[l+1]}(x)&=&x\frac{d\left[s_{r}^{[l]}(x) \right]}{dx}\\
&=& x\left[ \left[ g'(x)\cdot x^r + rx^{r-1}\cdot g(x)\right] \cdot (x+1)^{-(r+l)} \right]\\
&\ \ \ & -x\left[ (r+l)(x+1)^{-(r+l+1)}\cdot g(x) \cdot x^r\right]\\
&=& \left[\left[ x g'(x) +rg(x)\right] \cdot (x+1) - (r+l)x g(x)\right]\cdot x^r \cdot (x+1)^{-(r+l+1)}.
\end{eqnarray*}
Since $g(x)$ has degree less than $l$, it is not difficult to see that $$\left[ x g'(x) +rg(x)\right] \cdot (x+1) - (r+l)x g(x)$$ has degree less than $l+1.$ Hence, $g(x)$ has degree less than $k$ for all $k\geq 1.$\\
Therefore, for $k = 1, 2, \dots, p,$ $$Q_{r,p}^{[k]}(x)=x^{-r}(x +1)^{r + p} \cdot s_{r}^{[k]}(x) = (x+1)^{p-k} g(x)$$
is a polynomial of degree less than $p.$
\end{proof}

\begin{lemma}\label{FractionalZeroSum}
For any given real number $r$ and positive integer $p$,
$$\sum_{i = 0}^{p} (r+i)^k \binom{-r}{i} \binom{r + p}{p-i} = 0,$$
for $k = 1, 2, \dots, p.$
\end{lemma}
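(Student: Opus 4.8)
The plan is to recognize the sum as the coefficient of $x^p$ in the power series $Q_{r,p}^{[k]}(x)$ introduced in Lemma \ref{zeroCoefficient2}, and then invoke the degree bound proved there to conclude that this coefficient vanishes. This mirrors the structure of the proof of Lemma \ref{BinomialPowerZero}, with the degree bound on $Q_{r,p}^{[k]}$ playing the role that $q_k(1)=0$ played there.

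First I would make the generating function for $s_r^{[k]}$ explicit. Starting from the binomial series $(x+1)^{-r} = \sum_{i=0}^{\infty} \binom{-r}{i} x^i$, we have
$$s_r^{[0]}(x) = x^r(x+1)^{-r} = \sum_{i=0}^{\infty} \binom{-r}{i} x^{r+i}.$$
The operator $x\,\frac{d}{dx}$ multiplies the coefficient of $x^{r+i}$ by its exponent $r+i$, so a straightforward induction on $k$ using the definition $s_r^{[k]} = x\,\frac{d}{dx}\, s_r^{[k-1]}$ yields
$$s_r^{[k]}(x) = \sum_{i=0}^{\infty} (r+i)^k \binom{-r}{i} x^{r+i}.$$

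Next I would form $Q_{r,p}^{[k]}(x) = x^{-r}(x+1)^{r+p} s_r^{[k]}(x)$. The factor $x^{-r}$ cancels the $x^r$ in each term of $s_r^{[k]}$, leaving
$$Q_{r,p}^{[k]}(x) = (x+1)^{r+p} \sum_{i=0}^{\infty} (r+i)^k \binom{-r}{i} x^i.$$
Expanding $(x+1)^{r+p} = \sum_{j=0}^{\infty} \binom{r+p}{j} x^j$ and reading off the coefficient of $x^p$ in the resulting Cauchy product gives precisely
$$\sum_{i=0}^{p} (r+i)^k \binom{-r}{i} \binom{r+p}{p-i},$$
which is the left-hand side of the identity we wish to prove. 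Finally, I would apply Lemma \ref{zeroCoefficient2}: for each $k=1,2,\dots,p$, the function $Q_{r,p}^{[k]}$ is a polynomial of degree strictly less than $p$, so its coefficient of $x^p$ is zero, and the displayed sum vanishes for all such $k$.

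The genuinely substantive work is already carried out in Lemma \ref{zeroCoefficient2}, so the remaining obstacle here is purely bookkeeping: I must ensure the coefficient extraction is legitimate. The cleanest way is to interpret all identities in the ring of formal power series $\R[[x]]$ (or, analytically, for $|x|<1$ where every series converges), so that the Cauchy product is valid and the cancellation of the $x^{r}$ and $x^{-r}$ factors turns $Q_{r,p}^{[k]}$ into a genuine power series in nonnegative integer powers of $x$ whose coefficients are exactly the finite sums above.
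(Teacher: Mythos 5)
Your proposal is correct and follows essentially the same route as the paper: both identify the sum as the coefficient of $x^p$ in $Q_{r,p}^{[k]}(x)$ and then invoke the degree bound from Lemma \ref{zeroCoefficient2}. Your write-up is in fact slightly more careful than the paper's, since you make the binomial-series expansion of $s_r^{[k]}$ and the Cauchy-product coefficient extraction explicit rather than asserting them.
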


\begin{proof}
Let $s_{r}^{[0]}(x)=x^r(x+1)^{-r}$, and for $k\geq 1,$
$$s_{r}^{[k]}(x)=x\frac{d\left[s_{r}^{[k-1]}(x) \right]}{dx}.$$
By letting,
$$Q_{r,p}^{[k]}(x)=x^{-r} \cdot (x+1)^{r+p} \cdot  s_{r}^{[k]}(x),$$
one will have that the coefficient of $x^{p}$ is
$$\sum_{i = 0}^{p} (r+i)^k \binom{-r}{i} \binom{r + p}{p-i} .$$
By Lemma \ref{zeroCoefficient2}, for $k = 1, 2, , \dots, p$ , the polynomial $Q_{r,p}^{[k]}(x)$ is of degree less than $p.$
Therefore, 
$$\sum_{i = 0}^{p} (r+i)^k \binom{-r}{i}  \binom{r + p}{p-i} =0.$$
\end{proof}

\begin{theorem}[Arithmetic Progression Difference]
 Let $a$ be a real number, $d$ be a nonzero real numbers, and $k$ be a positive integer such that $0 \notin \lbrace a_1,a_2, \dots, a_k \rbrace,$ where $a_i = a + d(i-1).$ Then 
$$f'(x) = \frac{1}{h}\sum_{i=1}^{k}c_{a_i}\left[f(x+a_i h) - f(x)\right] + O(h^k),$$
where $$c_{a_i} = \frac{1}{a_i}\binom {-a_1/d}{i-1} \binom {a_k/d}{k-i},$$
for $i = 1,2, \dots, k.$
\end{theorem}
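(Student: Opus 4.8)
The plan is to follow the template already used for Theorem~\ref{BackwardDiff} and Theorem~\ref{CenteredDiff}: Taylor expand, reduce the statement to two moment conditions on the coefficients $c_{a_i}$, and then discharge those conditions using the arithmetic-progression identities in Lemmas~\ref{FractionalSum1} and~\ref{FractionalZeroSum}.

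First I would Taylor expand each node value to order $k$,
$$f(x + a_i h) - f(x) = \sum_{j=1}^{k}\frac{(a_i h)^j}{j!}f^{(j)}(x) + O(h^{k+1}),$$
multiply by $c_{a_i}$, sum over $i$, and interchange the order of summation to obtain
$$\sum_{i=1}^{k}c_{a_i}\left[f(x+a_i h) - f(x)\right] = \sum_{j=1}^{k}\frac{h^j}{j!}f^{(j)}(x)\left[\sum_{i=1}^{k}c_{a_i}a_i^j\right] + O(h^{k+1}).$$
It then suffices to prove the two moment conditions
$$\sum_{i=1}^{k}c_{a_i}a_i = 1, \qquad \sum_{i=1}^{k}c_{a_i}a_i^j = 0 \quad (j = 2, \dots, k),$$
since these collapse the right-hand side to $hf'(x) + O(h^{k+1})$, and dividing by $h$ yields the claim.

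The key organizational step is a change of variables matching the coefficients to the form of the lemmas. Setting $r = a_1/d$, $p = k-1$, and reindexing with $i' = i-1$ ranging from $0$ to $p$, I would note that $a_i = d(r+i')$, that $-a_1/d = -r$, and that $a_k/d = r + p$, so that
$$c_{a_i} = \frac{1}{d(r+i')}\binom{-r}{i'}\binom{r+p}{p-i'}.$$
The hypothesis $0 \notin \lbrace a_1,\dots,a_k\rbrace$ guarantees $r + i' \neq 0$ for every $i'$, so the coefficients are well-defined. With this substitution the factor $1/a_i$ cancels against one power of $a_i$ in each moment. The first condition becomes $\sum_{i'=0}^{p}\binom{-r}{i'}\binom{r+p}{p-i'}$, which equals $1$ by Lemma~\ref{FractionalSum1}. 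For $j \geq 2$ I would write $a_i^j = a_i \cdot d^{j-1}(r+i')^{j-1}$, so that
$$\sum_{i=1}^{k}c_{a_i}a_i^j = d^{j-1}\sum_{i'=0}^{p}(r+i')^{j-1}\binom{-r}{i'}\binom{r+p}{p-i'},$$
which vanishes for $j-1 = 1, \dots, p$, that is for $j = 2, \dots, k$, by Lemma~\ref{FractionalZeroSum}.

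There is no genuine analytic obstacle here; the argument is a direct transcription of the earlier proofs once the two fractional-binomial lemmas are in hand. The only point demanding care is the bookkeeping of the change of variables---in particular correctly identifying $a_k/d$ with $r+p$ and checking that the admissible exponent range $k = 1, \dots, p$ in Lemma~\ref{FractionalZeroSum} lines up exactly with the moment orders $j = 2, \dots, k$ that must be annihilated. I would therefore exhibit the substitution explicitly and treat verification of the two moment conditions as the substance of the proof.
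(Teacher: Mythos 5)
Your proposal is correct and follows essentially the same route as the paper's proof: Taylor expansion, reduction to the moment conditions $\sum_i c_{a_i}a_i = 1$ and $\sum_i c_{a_i}a_i^j = 0$ for $j = 2,\dots,k$, and then an appeal to Lemmas \ref{FractionalSum1} and \ref{FractionalZeroSum} under the substitution $r = a/d$, $p = k-1$. The only difference is presentational --- you make the change of variables explicit where the paper carries it out inline in the displayed computations.
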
\label{ArithmeticType1}

\begin{proof}
By using a Taylor series expansion, we obtain
\begin{equation}\label{TalorExpansion}
\sum_{i=1}^{k}c_{a_i}\left[\sum_{j=1}^{k}\frac{(a_i h)^j}{j!} f^{(j)}(x)\right]  + O(h^{k+1})= \sum_{i=1}^{k}c_{a_i}\left[f(x+a_i h) - f(x)\right].
\end{equation}
We claim that $$c_{a_i} = \frac{1}{a_i}\binom {-a_1/d}{i-1}  \binom {a_k/d}{k-i} $$ reduces the left hand side of equation \eqref{TalorExpansion} to $hf'(x) + O(h^{k+1}).$\\
Rearranging the left hand side of equation \eqref{TalorExpansion}, we deduce
$$\sum_{i=1}^{k}c_{a_i}\left[\sum_{j=1}^{k}\frac{(a_i h)^j}{j!} f^{(j)}(x)\right]  + O(h^{k+1}) = \sum_{j=1}^{k}\frac{h^j}{j!}f^{(j)}(x) \left[\sum_{i=1}^{k}c_{a_i}\cdot a_i^j \right]  + O(h^{k+1}).$$
By Lemma \ref{FractionalSum1}, $$\sum_{i=1}^{k}c_{a_i}\cdot a_i  = \sum_{i = 1}^{k}\binom{-a_1/d}{i-1}  \binom{a_k/d}{k-i}  = \sum_{i = 0}^{k-1}\binom{-a/d}{i} \binom{a/d + (k-1)}{k-i-1} = 1.$$
By Lemma \ref{FractionalZeroSum}, for $j = 2, \dots, k,$
\begin{eqnarray*}
\sum_{i=1}^{k} c_{a_i} \cdot a_i ^j &=& \sum_{i = 1}^{k} a_i^{j-1} \binom{- a_1 /d}{i-1} \binom{a_k /d}{k-i}  \\
&=& d^{j-1}\cdot \sum_{i = 0}^{k-1} {(a/d+i)}^{j-1} \binom{-a/d}{i} \binom{a/d + (k-1)}{k-i-1} = 0.
\end{eqnarray*}
Thus,
$$\sum_{i=1}^{k}c_{a_i}\left[\sum_{j=1}^{k}\frac{(ih)^j}{j!} f^{(j)}(x)\right]  + O(h^{k+1}) = hf'(x) + O(h^{k+1}),$$
and the desired result follows.
\end{proof}

\subsection{General forms of finite differences for $f^{(n)}$}

We extend the idea from previous results to generalize the coefficients of finite difference quotients for $f^{(n)}$.  In computing them, we need to solve for the inverses of the matrices defined as in Theorem \ref{matrix1} and \ref{matrix2}. Because of its generality, the results presented in this section will also hold for all the previous results. Indeed, the first columns of these inverses represent the coefficients of a finite difference quotient for $f'$, the second columns represent the coefficients of a finite difference quotient for $f''$, and so on. Some notations and lemmas that are essential for proving them are given below.

\begin{lemma}\label{PowerLeadingCoef}
For any given real numbers $r, \lambda,$ and positive integer $p,$\\
define $t_{\lambda, r}^{[0]}(x)=x^r(x+1)^{-(r-\lambda)}$, and for $k\geq 1,$
$$t_{\lambda, r}^{[k]}(x)=x\frac{d\left[t_{\lambda, r}^{[k-1]}(x) \right]}{dx}.$$
Then for $k = 1, 2, \dots, p$, the coefficient of the term $x^{p}$ in the polynomial
$$R_{\lambda, r,p}^{[k]}(x)=x^{-r} \cdot (x+1)^{(r-\lambda) +p} \cdot  t_{\lambda, r}^{[k]}(x)$$
is $\lambda^k.$ 
\end{lemma}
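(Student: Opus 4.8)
The plan is to mirror the inductive argument used for Lemma \ref{zeroCoefficient2}, which is exactly the special case $\lambda = 0$ of the present statement, but to track the leading coefficient of the polynomial factor rather than merely bounding its degree. Indeed, $t_{\lambda,r}^{[0]}(x) = x^r(x+1)^{-(r-\lambda)}$ and $R_{\lambda,r,p}^{[k]}$ reduce to $s_r^{[k]}$ and $Q_{r,p}^{[k]}$ when $\lambda=0$, in which case $\lambda^k=0$ records precisely the ``degree less than $p$'' conclusion of Lemma \ref{zeroCoefficient2}.

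First I would establish, by induction on $k$, the normal form
$$t_{\lambda, r}^{[k]}(x) = g_k(x)\, x^r (x+1)^{\lambda - r - k},$$
where $g_k$ is a polynomial of degree exactly $k$ with leading coefficient $\lambda^k$. The base case $k=0$ is immediate with $g_0 \equiv 1$, and a one-line computation gives $g_1(x) = \lambda x + r$. For the inductive step I would apply the operator $x\,\tfrac{d}{dx}$ to the assumed form, factor out $x^r (x+1)^{\lambda - r - k - 1}$, and read off the recursion
$$g_{k+1}(x) = x(x+1)\, g_k'(x) + r(x+1)\, g_k(x) + (\lambda - r - k)\, x\, g_k(x).$$
Each of the three terms has degree $k+1$, with respective coefficients of $x^{k+1}$ equal to $k\lambda^k$, $r\lambda^k$, and $(\lambda - r - k)\lambda^k$; these sum to $\lambda^k\bigl(k + r + \lambda - r - k\bigr) = \lambda^{k+1}$, which is the cancellation that drives the induction.

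With the normal form in hand, I would substitute it into the definition of $R_{\lambda, r, p}^{[k]}$, where the powers of $x$ and of $(x+1)$ collapse to give
$$R_{\lambda, r, p}^{[k]}(x) = g_k(x)\,(x+1)^{p - k}.$$
For $1 \le k \le p$ this is genuinely a polynomial of degree $k + (p-k) = p$, and its coefficient of $x^p$ is the product of the leading coefficient of $g_k$ (namely $\lambda^k$) with that of $(x+1)^{p-k}$ (namely $1$), yielding $\lambda^k$ as claimed.

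The computation is entirely routine; the only point requiring care is the bookkeeping of the leading coefficient in the inductive step, where the algebraic identity $k + r + (\lambda - r - k) = \lambda$ is exactly what guarantees the leading coefficient picks up a factor of $\lambda$ at each stage rather than degenerating. I expect no genuine obstacle beyond keeping this accounting straight.
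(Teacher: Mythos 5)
Your proof is correct and follows essentially the same route as the paper's: the same induction establishing the normal form $t_{\lambda,r}^{[k]}(x)=g_k(x)\,x^r(x+1)^{\lambda-r-k}$ with $\deg g_k=k$ and leading coefficient $\lambda^k$, driven by the same cancellation $k+r+(\lambda-r-k)=\lambda$, followed by reading off the coefficient of $x^p$ in $g_k(x)(x+1)^{p-k}$. (Your exponent $\lambda-r-k$ is in fact the correct one; the paper's proof writes $(x+1)^{-r+\lambda+k}$, a sign typo that its own inductive computation contradicts.)
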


\begin{proof}
We will show by mathematical induction that for all $k\geq 1,$ $$t_{\lambda, r}^{[k]}(x)=g(x)\cdot x^r \cdot (x+1)^{-r+\lambda+k},$$ where $g(x)$ is a polynomial of degree $k$ with the leading coefficient of $\lambda^k$.\\
It is true when $k =1$ because $$t_{\lambda, r}^{[1]}(x)=(\lambda x + r)\cdot x^r \cdot(x+1)^{-r+\lambda+1}.$$
Let's assume that this is true for some positive integer $l.$ Then we will have that
\begin{eqnarray*}
t_{\lambda, r}^{[l+1]}(x)&=&x\frac{d\left[t_{\lambda, r}^{[l]}(x) \right]}{dx}\\
&=& x\left[ \left[ g'(x)\cdot x^r + rx^{r-1}\cdot g(x)\right] \cdot (x+1)^{-r+\lambda+l} \right]\\
&\ \ & -x\left[ \left[ r-\lambda+l\right] (x+1)^{-r+\lambda+l+1}\cdot g(x) \cdot x^r\right]\\
&=& \left[\left[ x g'(x) +rg(x)\right] \cdot (x+1) - (r - \lambda +l)x\cdot g(x)\right]\cdot x^r \cdot (x+1)^{-r+\lambda+l+1}.
\end{eqnarray*}
Since the leading term of $g(x)$ is $\lambda^l x^{l}$, it follows that the leading coefficient of \\
$\left[ x g'(x) +rg(x) \right] \cdot (x+1) - (r - \lambda +l)x\cdot g(x)$ is 
$$\left[ (l +r)\lambda^l - (r-\lambda + l)\lambda^l \right] x^{l+1} = \lambda^{l+1} x^{l+1}.$$ 
Hence, $g(x)$ is a polynomial of degree $k$ with the leading coefficient of $\lambda^k$ for all $k\geq 1.$\\
Therefore, for $k = 1, 2, \dots, p,$ the coefficient of the term $x^{p}$ in the polynomial $$R_{\lambda, r,p}^{[k]}(x)=x^{-r}(x+1)^{-r+\lambda + p} \cdot t_{\lambda, r}^{[k]}(x)$$
is $\lambda^k.$
\end{proof}

\begin{lemma}\label{FractionalPolynomial}
Let $r$ be a real number and $p$ be a positive integer. Define the polynomial
$$H_i(\lambda) = \binom{-r + \lambda}{i}  \binom{r + p - \lambda}{p-i}.$$
Then, for $i = 1, \dots, p,$
\[
H_i(\lambda ^*) = 
\begin{cases}
1 \text{ if } \lambda  ^*= r + i,\\
0 \text{ if } \lambda ^* = r + j, \text{ where } 0 \leq j \leq p \text{ and } j \neq i.
\end{cases}
\]
\end{lemma}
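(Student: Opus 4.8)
The plan is to evaluate $H_i$ directly at the prescribed points $\lambda^* = r + j$ for $0 \le j \le p$ and watch the generalized binomial coefficients collapse to ordinary ones. First I would substitute $\lambda^* = r+j$, observing that the shifts by $r$ cancel: $-r + \lambda^* = j$ and $r + p - \lambda^* = p - j$, so that
\[
H_i(r+j) = \binom{j}{i}\binom{p-j}{p-i}.
\]
Since $0 \le j \le p$, both $j$ and $p-j$ are nonnegative integers, so by Definition \ref{nchooseK} each of these two factors is in fact an ordinary binomial coefficient rather than a genuinely generalized one.

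The key fact I would exploit is that for a nonnegative integer $m$ and a positive integer $k$, the numerator $m(m-1)\cdots(m-k+1)$ of $\binom{m}{k}$ contains the factor $0$ whenever $m < k$, so that $\binom{m}{k} = 0$ in that range, while $\binom{m}{m} = 1$. With this in hand I would split into three cases according to the sign of $j - i$. When $j = i$, both factors equal $\binom{i}{i} = \binom{p-i}{p-i} = 1$, giving $H_i(r+i) = 1$. When $j < i$, the first factor $\binom{j}{i}$ vanishes. When $j > i$, we have $p - j < p - i$, so the second factor $\binom{p-j}{p-i}$ vanishes. In either of the latter two cases $H_i(r+j) = 0$, which is precisely the asserted value.

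I do not expect a serious obstacle here: the entire content is the cancellation $-r + (r+j) = j$ together with the vanishing of a binomial coefficient whose integer top lies strictly below its bottom. The only points that merit a little care are the boundary indices---checking that the argument still goes through when $i = p$ (so that $p - i = 0$ and $\binom{p-j}{0} = 1$ by the convention of Definition \ref{nchooseK}, while the case $j > i$ becomes vacuous), and confirming that in every factor the top is a genuine nonnegative integer, which is exactly what the hypothesis $0 \le j \le p$ guarantees.
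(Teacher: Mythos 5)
Your proof is correct and follows essentially the same route as the paper: the case $\lambda^*=r+i$ is handled by the identical direct substitution $\binom{i}{i}\binom{p-i}{p-i}=1$, and the vanishing cases rest on the same observation that a falling-factorial numerator contains a zero factor. The only cosmetic difference is that the paper factors $H_i(\lambda)$ globally as $(-1)^{p-i}\prod_{j\neq i}(\lambda-(r+j))/\bigl(i!\,(p-i)!\bigr)$ and reads off its roots, whereas you substitute first and invoke $\binom{m}{k}=0$ for integers $0\le m<k$; your explicit check of the boundary case $i=p$ is a welcome bit of extra care.
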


\begin{proof} In the case when $\lambda = r + i,$ we directly compute
$$H_i(r+i) = \binom{-r + (r+i)}{i} \binom{r + p - (r+i)}{p-i} = \binom{i}{i} \binom{p - i}{p-i} = 1.$$
In other cases, we express $H_i$ as a product of linear functions of $\lambda$ and yield
\begin{eqnarray*}
\binom{-r + \lambda}{i} \binom{r + p - \lambda}{p-i} &=& H_i(\lambda)\\
&=& (-1)^{p-i}\frac{(\lambda-r)\cdot (\lambda-(r+1))\cdots (\lambda-(r+p))}{(\lambda-(r+i))\cdot i!\cdot (p-i)!}.
\end{eqnarray*}
From here, it is difficult to see that for $i = 1, \dots, p,$
$$H_i(\lambda ^*) = 0 \text{ if } \lambda ^* = r + j, \text{ where } 0 \leq j \leq p \text{ and } j \neq i.$$
\end{proof}

\begin{lemma}\label{GK}
For any given real number $r$ and positive integer $p$, the polynomial
$$G_k(\lambda) = \sum_{i = 0}^{p} (r+i)^k \binom{-r + \lambda}{i} \binom{r + p - \lambda}{p-i}= \lambda^k,$$
for $k = 1, 2, \dots, p.$
\end{lemma}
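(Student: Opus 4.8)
The plan is to recognize $G_k(\lambda)$ as a coefficient-extraction from the generating function already built in Lemma \ref{PowerLeadingCoef}, and then simply quote that lemma. This is the $\lambda$-deformed analogue of the generating-function computation used in Lemma \ref{FractionalZeroSum}, so I expect it to go through in the same spirit.

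First I would expand the seed function as a formal power series: since $t_{\lambda,r}^{[0]}(x) = x^r(x+1)^{\lambda - r} = \sum_{m\ge 0}\binom{\lambda-r}{m} x^{r+m}$, and since the operator $x\,\frac{d}{dx}$ multiplies the coefficient of $x^{r+m}$ by the factor $(r+m)$, an immediate induction gives
\[
t_{\lambda,r}^{[k]}(x) = \sum_{m\ge 0} (r+m)^k \binom{\lambda-r}{m}\, x^{r+m}.
\]
Next I would multiply by $x^{-r}(x+1)^{(r-\lambda)+p}$ and expand the remaining binomial factor as $(x+1)^{(r-\lambda)+p} = \sum_{l\ge 0}\binom{(r-\lambda)+p}{l}x^l$. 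Taking the Cauchy product and reading off the coefficient of $x^p$ yields
\[
[x^p]\,R_{\lambda,r,p}^{[k]}(x) = \sum_{m=0}^{p} (r+m)^k \binom{-r+\lambda}{m}\binom{r+p-\lambda}{p-m} = G_k(\lambda),
\]
where I used $\binom{\lambda-r}{m} = \binom{-r+\lambda}{m}$ and $\binom{(r-\lambda)+p}{p-m} = \binom{r+p-\lambda}{p-m}$. Lemma \ref{PowerLeadingCoef} asserts that this very coefficient of $x^p$ equals $\lambda^k$ for $k=1,\dots,p$, so $G_k(\lambda)=\lambda^k$ follows at once.

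The main obstacle, and the only point requiring care, is the bookkeeping of the formal power series, since $\lambda - r$ need not be a nonnegative integer: the expansions above are identities in the ring of formal power series built on the extended binomial coefficient of Definition \ref{nchooseK}, and I must confirm that the coefficient of $x^p$ in the product is genuinely the finite sum over $m=0,\dots,p$ rather than an infinite one. This is automatic because pairing $x^{r+m}$ from $t_{\lambda,r}^{[k]}$ with $x^{l}$ from $x^{-r}(x+1)^{(r-\lambda)+p}$ to reach total degree $p$ forces $l = p-m \ge 0$, i.e. $0\le m\le p$.

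As an alternative I would note a cleaner, more conceptual route using Lemma \ref{FractionalPolynomial} instead. Writing $H_i(\lambda) = \binom{-r+\lambda}{i}\binom{r+p-\lambda}{p-i}$, each $H_i$ is a polynomial of degree $p$ in $\lambda$, and Lemma \ref{FractionalPolynomial} (together with the trivial case $i=0$, where $H_0(r+j)=\binom{p-j}{p}=\delta_{0j}$) shows that $H_i(r+j)=\delta_{ij}$ for $0\le i,j\le p$; that is, $\{H_i\}_{i=0}^p$ is the Lagrange interpolation basis at the nodes $r,r+1,\dots,r+p$. Hence $G_k(r+j)=\sum_{i=0}^p (r+i)^k H_i(r+j) = (r+j)^k$ for every $j=0,\dots,p$. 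Since $G_k(\lambda)$ has degree at most $p$ and $\lambda^k$ has degree $k\le p$, two polynomials of degree $\le p$ agreeing at the $p+1$ distinct points $r,r+1,\dots,r+p$ must coincide, giving $G_k(\lambda)=\lambda^k$.
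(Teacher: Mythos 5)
Your main argument is exactly the paper's proof: the paper also identifies $G_k(\lambda)$ as the coefficient of $x^p$ in $R_{\lambda,r,p}^{[k]}(x)$ (citing the computation from Lemma \ref{FractionalZeroSum}) and then quotes Lemma \ref{PowerLeadingCoef}; you merely spell out the coefficient extraction and the finiteness of the sum more explicitly, which is a welcome addition. Your alternative route is genuinely different and arguably cleaner: recognizing $\{H_i\}_{i=0}^{p}$ as the Lagrange basis at the nodes $r, r+1,\dots,r+p$ via Lemma \ref{FractionalPolynomial} (with the $i=0$ case checked by hand, since the paper only states that lemma for $i\geq 1$) reduces the identity to two polynomials of degree at most $p$ agreeing at $p+1$ points. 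This second argument bypasses the generating-function machinery of Lemma \ref{PowerLeadingCoef} entirely and makes transparent why the hypothesis $k\leq p$ is needed (so that $\lambda^k$ also has degree at most $p$), whereas the paper's route earns the result as a byproduct of the leading-coefficient computation it needs anyway for the matrix inversion in Lemma \ref{matrix1}. Both are correct.
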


\begin{proof} In the similar fashion to the proof for Lemma \ref{FractionalZeroSum}, we have that, for $k = 1, 2, \dots, p,$ the expression
$$\sum_{i = 0}^{p} (r+i)^k \binom{-r + \lambda}{i} \binom{r + p - \lambda}{p-i}$$ is the coefficient of $x^{p}$ in the polynomial $R_{\lambda, r, p}^{[k]}(x)$ from the previous theorem. Hence, $$\sum_{i = 0}^{p} (r+i)^k \binom{-r + \lambda}{i} \binom{r + p - \lambda}{p-i} = \lambda^k.$$ Thus, it automatically holds when we define the polynomial
$$G_k(\lambda) = \sum_{i = 0}^{p} (r+i)^k \binom{-r + \lambda}{i} \binom{r + p - \lambda}{p-i}.$$
\end{proof}

\begin{definition}\label{Sigma}
Let $S = \lbrace s_1,s_2, \dots, s_n \rbrace$ be a set containing real numbers. Define $\sigma_0(S) = 1$ and $$\sigma_k(S) = \sum_{a_1<a_2<\cdots<a_k \in S} a_1\cdot a_2 \cdot \cdots a_k,$$
for $i = 1, 2, \dots, n.$
\end{definition}

\begin{lemma} \label{matrix1}
For any real number $a,$ nonzero real number $d,$ and positive integer $n$ such that $0 \notin T = \lbrace a, a+d, \dots, a+(n-1)d \rbrace$, define $A$ to be an $n\times n-$matrix with
$$A_{ij} = (a+(j-1)d)^i.$$ Then $A^{-1} = B,$ where $$B_{ij} = \frac{(-1)^{i+j}}{a+(i-1)d} \cdot \frac{\sigma_{n-j}(T - \lbrace a+(i-1)d\rbrace)}{d^{n-1}(i-1)!(n-i)!}.$$
\end{lemma}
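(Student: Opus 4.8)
The plan is to verify that $B=A^{-1}$ by a direct computation showing $AB=I$, making essential use of the polynomial identities established in Lemmas \ref{FractionalPolynomial} and \ref{GK}. The matrix $A$ is a (transposed, shifted) Vandermonde-type matrix with nodes drawn from the arithmetic progression $T=\{a,a+d,\dots,a+(n-1)d\}$, so the claimed inverse $B$ should encode the corresponding Lagrange interpolation data; the elementary symmetric functions $\sigma_{n-j}$ appearing in $B$ are exactly what arise when one expands a Lagrange basis polynomial into monomials. So the strategy is to interpret the product $AB$ through these interpolation identities rather than to compute determinants or cofactors by brute force.

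First I would write out the $(i,k)$ entry of the product $AB$. With $A_{ij}=(a+(j-1)d)^{j}$... more precisely $A_{ij}=(a+(j-1)d)^{i}$, we have
\[
(AB)_{ik}=\sum_{j=1}^{n} A_{ij}B_{jk}
 = \sum_{j=1}^{n}(a+(j-1)d)^{i}\,\frac{(-1)^{j+k}}{a+(k-1)d}\cdot\frac{\sigma_{n-k}\bigl(T-\{a+(k-1)d\}\bigr)}{d^{n-1}(k-1)!(n-k)!}.
\]
The goal is to show this equals $\delta_{ik}$. The key observation is that the factor $\sigma_{n-k}(T-\{a+(k-1)d\})$, together with the sign and factorial normalization, is precisely the monomial-coefficient data of the Lagrange basis polynomial $\ell_k$ associated with the node $a+(k-1)d$. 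My plan is to substitute the nodes in the form $a+(j-1)d = d(a/d + (j-1))$, factor out powers of $d$, and reduce the whole identity to the polynomial evaluation statements already proved: Lemma \ref{FractionalPolynomial} gives the Kronecker-delta behavior $H_i(\lambda^*)=\delta$-type values at the shifted nodes, and Lemma \ref{GK} gives $G_k(\lambda)=\lambda^k$. After the substitution the sum over $j$ should collapse into exactly one of these forms, with the row index $i$ playing the role of the power $k$ in $G_k$ and the column index $k$ selecting which node is picked out.

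Concretely, I expect to set $r=a/d$ and $p=n-1$, and to recognize the elementary symmetric sum $\sigma_{n-k}(T-\{a+(k-1)d\})$ as the coefficient produced when the product $\prod_{j\neq k}(x-(a+(j-1)d))$ is expanded; up to the normalizing constant $d^{n-1}(k-1)!(n-k)!$ this is the binomial-product form $\binom{-r+\lambda}{i}\binom{r+p-\lambda}{p-i}$ evaluated at the appropriate $\lambda$. Then $\sum_j (\text{node})^i\cdot(\text{Lagrange coefficient})$ is exactly $G_i$ evaluated at the node $a+(k-1)d$, which by Lemma \ref{GK} returns that node raised to the $i$, and by the delta-property of Lemma \ref{FractionalPolynomial} this normalizes to give $\delta_{ik}$ across the rows. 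I would handle the boundary case $i=0$ (or the constant row) separately using $\sigma_0=1$ and Corollary \ref{G0}, which already states $G_0(\lambda)=1$.

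The main obstacle I anticipate is bookkeeping the three intertwined normalizations correctly: the power-of-$d$ scaling that comes out of writing $a+(j-1)d=d(r+j-1)$, the sign pattern $(-1)^{i+j}$, and the factorial denominators, and then matching these exactly against the binomial-coefficient form $\binom{-r+\lambda}{i}\binom{r+p-\lambda}{p-i}$ used in Lemmas \ref{FractionalPolynomial} and \ref{GK}. In particular I must check that the elementary symmetric polynomial $\sigma_{n-k}$ of the punctured node set, after factoring out $d^{n-1}$ and the factorials, genuinely equals the binomial product up to sign — this is the identity that links the combinatorial $\sigma_k$ notation of Definition \ref{Sigma} to the generating-function binomials, and it is where an off-by-one in the index of $\sigma$ or a misplaced sign would silently break the argument. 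Once that dictionary between $\sigma_{n-j}(T-\{\cdot\})$ and the binomial coefficients is pinned down, the rest follows immediately from Lemmas \ref{FractionalPolynomial} and \ref{GK}, so I would invest the care there and treat the final collapse of the sum as routine.
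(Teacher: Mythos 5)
Your overall strategy --- prove $AB=I$ by translating the $\sigma$'s into the binomial products of Lemmas \ref{FractionalPolynomial} and \ref{GK} and letting those identities collapse the sum --- is exactly the route the paper takes (the paper additionally verifies $BA=I$, using Lemma \ref{FractionalPolynomial} for that half). But two concrete steps in your write-up are wrong and would derail the computation. First, your displayed formula for $B_{jk}$ has the indices swapped: in $B_{ij}$ it is the \emph{row} index that governs which node is deleted, which node sits in the denominator, and which factorials appear, so
\[
B_{jk}=\frac{(-1)^{j+k}}{a+(j-1)d}\cdot\frac{\sigma_{n-k}\bigl(T-\{a+(j-1)d\}\bigr)}{d^{n-1}(j-1)!(n-j)!},
\]
whereas you wrote $a+(k-1)d$ in the denominator and in the deleted node, and $(k-1)!(n-k)!$ in the factorials. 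As you have it, everything except the sign is constant in the summation variable $j$ and can be pulled out, so the sum cannot produce $\delta_{ik}$. The same misreading appears in prose when you say the column index ``selects which node is picked out'': the row index of $B$ selects the node, while the column index selects which monomial coefficient of that node's Lagrange polynomial you are reading off.

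Second, the final collapse is misdescribed. For $AB=I$ the sum $\sum_j(\text{node}_j)^{i}B_{jk}$ is, after factoring out powers of $d$, the \emph{coefficient of} $\lambda^{k-1}$ in $G_{i-1}(\lambda)$ --- not the \emph{value} of $G_i$ at the node $a+(k-1)d$. Since $G_{i-1}(\lambda)=\lambda^{i-1}$ by Corollary \ref{G0} and Lemma \ref{GK}, that coefficient is $\delta_{ik}$ directly; no appeal to Lemma \ref{FractionalPolynomial} is needed or meaningful here, and ``$(a+(k-1)d)^i$ normalized by the delta-property'' is not $\delta_{ik}$. Lemma \ref{FractionalPolynomial} is the evaluation-at-nodes statement and is the tool for the \emph{other} product: $(BA)_{ij}$ equals, up to the factor $\tfrac{a+(j-1)d}{a+(i-1)d}$, the value $H_i(a/d+j-1)$, which is where the Kronecker-delta behavior enters. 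Keeping the two mechanisms separate --- coefficient extraction via $G_k(\lambda)=\lambda^k$ for $AB$, node evaluation via $H_i$ for $BA$ --- and correcting the indices fixes the argument; the $\sigma$-to-binomial dictionary you rightly flag as the delicate point then works out exactly as in the paper.
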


\begin{proof}
We will first show that $AB = I_n.$ By directly computing the product of $A$ and $B$, we deduce
\begin{eqnarray*}
(AB)_{ij} &=& \sum_{k=1}^n A_{ik} \cdot B_{kj}\\
&=& \sum_{k=1}^n (a+(k-1)d)^i\cdot \left[ \frac{(-1)^{k+j}}{a+(k-1)d} \cdot \frac{\sigma_{n-j}(T - \lbrace a+(k-1)d\rbrace)}{d^{n-1}(k-1)!(n-k)!}\right]\\
&=& \sum_{k=1}^n (-1)^{k+j}(a+(k-1)d)^{i-1} \cdot \frac{\sigma_{n-j}(T - \lbrace a+(k-1)d\rbrace)}{d^{n-1}(k-1)!(n-k)!}\\
&=& d^{i-j}\cdot \sum_{k=0}^{n-1} (-1)^{k+j+1}(a/d+k)^{i-1} \cdot \frac{\sigma_{n-j}(T - \lbrace a+kd\rbrace)}{d^{n-j}(k)!(n-k-1)!}.
\end{eqnarray*}
We note that $(AB)_{ij}$ equals the coefficient of $x^{j-1}$ in the polynomial $d^{i-j}\cdot  G_{i-1}(\lambda),$ where $G(\lambda)$ is defined as in Corollary \ref{G0} and Lemma \ref{GK} (by setting $r = a/d$) . Thus, by Corollary \ref{G0} and Lemma \ref{GK},
\[
(AB)_{ij}=\begin{cases}
1 & \text{if } i = j,\\
0  & \text{if } i \neq j.
\end{cases}
\]
This establishes $AB = I_n$ as desired. \\
Now we will show that $BA = I_n.$ By directly computing the product of $B$ and $A$, we deduce
\begin{eqnarray*}
(BA)_{ij}&= &\sum_{k=1}^n B_{ik} \cdot A_{kj}\\
&=& \sum_{k=1}^n \left[ \frac{(-1)^{i+k}}{a+(i-1)d} \cdot \frac{\sigma_{n-k}(T - \lbrace a+(i-1)d\rbrace)}{d^{n-1}(i-1)!(n-i)!}\right](a+(j-1)d)^k\\
&=& \frac{a+(j-1)d}{a+(i-1)d}\left[\sum_{k=1}^n (-1)^{i+k}(a+(j-1)d)^{k-1} \cdot \frac{\sigma_{n-k}(T - \lbrace a+(i-1)d\rbrace)}{d^{n-1}(i-1)!(n-i)!}\right]\\
&=& \frac{a+(j-1)d}{a+(i-1)d} \left[\sum_{k=1}^n (-1)^{i+k}(a/d+j-1)^{k-1} \cdot \frac{\sigma_{n-k}(T - \lbrace a+(i-1)d\rbrace)}{d^{n-k}(i-1)!(n-i)!}\right]\\
&=&\frac{a+(j-1)d}{a+(i-1)d}  \cdot H_i(a/d + j-1),
\end{eqnarray*}
where $H_i$ is defined as in Lemma \ref{FractionalPolynomial}.
Thus, by Lemma \ref{FractionalPolynomial},
\[
(BA)_{ij}=\begin{cases}
1 & \text{if } i = j,\\
0  & \text{if } i \neq j.
\end{cases}
\]
This establishes $BA = I_n.$ Therefore, $A^{-1} = B.$
\end{proof}

\begin{lemma} \label{matrix2}
For any positive integers $m, n,$ and nonzero real number $d$, let the set $T = \lbrace -md, -(m-1)d, \dots, -d, d, \dots, nd\rbrace$ and define the $(m+n)\times (m+n)-$matrix
$$A = 
\begin{bmatrix}
C \ D
\end{bmatrix}
$$
where $C$ is an $(m+n) \times m-$matrix with $C_{ij} = \left[(j-m-1)d \right]^i$ and $D$ is an $(m+n)\times n-$matrix with $D_{ij} = (jd)^i.$ Then 
$$ A^{-1} = B = 
\begin{bmatrix}
C^*  \\ D^*
\end{bmatrix}
$$
where $C^*$ is an $m\times (n+m)-$matrix with 
$$C^*_{ij} =(-1)^{i+j+1}\cdot \frac{\sigma_{n+m-j}(T - \lbrace(i-m-1)d\rbrace)}{d^j \cdot (i-1)!(n+m-i+1)!},$$ and
$D^*$ is an $n\times (n+m)-$matrix with 
$$D^*_{ij} =(-1)^{m+i+j}\cdot \frac{\sigma_{n+m-j}(T - \lbrace id\rbrace)}{d^j \cdot (m+i)!(n-i)!}.$$
\end{lemma}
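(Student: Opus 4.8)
The plan is to follow the pattern of the proof of Lemma \ref{matrix1} and verify both $AB=I_{m+n}$ and $BA=I_{m+n}$ by direct computation, reducing each entry to an evaluation of the polynomials $G_k$ and $H_i$ supplied by Corollary \ref{G0}, Lemma \ref{GK}, and Lemma \ref{FractionalPolynomial}. The essential structural observation is that $A$ is the power matrix $A_{ij}=\nu_j^{\,i}$ of the node set $T=\{-md,\dots,-d,d,\dots,nd\}$ using only the powers $i=1,\dots,m+n$ (the constant row $i=0$ is absent), and that $T$ is exactly the complete arithmetic progression $T':=\{-md,\dots,-d,0,d,\dots,nd\}$ with the point $0$ deleted. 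Writing each node as $\nu_s=(s-m)d$ for a position $s\in\{0,\dots,m+n\}\setminus\{m\}$, the rows of $C^*$ correspond to positions $s=0,\dots,m-1$ and the rows of $D^*$ to positions $s=m+1,\dots,m+n$, so that both blocks become one formula $B_{s,j}=(-1)^{s+j}\,\sigma_{m+n-j}(T\setminus\{\nu_s\})/\bigl(d^{\,e}\,s!\,(m+n-s)!\bigr)$ for a single exponent $e$ to be pinned down below. The point $0$ re-enters as an interpolation node precisely because $A$ omits the $0$-th power, and this is what lets the complete-progression identities be applied.

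For $BA=I$, I would compute $(BA)_{s,s'}=\sum_{i=1}^{m+n}B_{s,i}\,\nu_{s'}^{\,i}=P_s(\nu_{s'})$, where $P_s(x)=\sum_{i=1}^{m+n}B_{s,i}x^i$ has no constant term. Because $P_s(0)=0$, the polynomial $P_s$ vanishes at the added node $0\in T'$, and the elementary symmetric functions appearing in its coefficients are the same whether one uses $T\setminus\{\nu_s\}$ or $T'\setminus\{\nu_s\}$, since every product through the factor $0$ vanishes. Hence $P_s$ is, up to the normalizing constant, the Lagrange basis polynomial for the node $\nu_s$ on the \emph{complete} progression $T'$, which after rescaling by $d$ is exactly $H_s$. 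By Lemma \ref{FractionalPolynomial} this polynomial equals $1$ at $\nu_s$ and $0$ at every other node of $T'$, in particular at each $\nu_{s'}$ with $s'\neq s$, giving $(BA)_{s,s'}=\delta_{s,s'}$.

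For $AB=I$, I would compute $(AB)_{i,i'}=\sum_s \nu_s^{\,i}B_{s,i'}$, the sum running over the $m+n$ nodes, i.e. over all positions except $s=m$. Since every power satisfies $i\ge1$, the missing term (node $\nu_m=0$) would contribute $0^i=0$, so one may freely restore it and sum over all positions $s=0,\dots,m+n$. The resulting sum is precisely the coefficient of $\lambda^{i'}$ extracted from the identities $G_0\equiv1$ and $G_i(\lambda)=\lambda^i$ of Corollary \ref{G0} and Lemma \ref{GK}, taken with $r=-m$ and $p=m+n$, so that the node at position $a$ is $r+a=a-m$ and $H_a$ is the corresponding Lagrange weight. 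Reading off the coefficient of $\lambda^{i'}$ in $\lambda^i$ yields $\delta_{i,i'}$, hence $AB=I$.

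The main obstacle, and the only place demanding genuine care, is the combinatorial bookkeeping forced by the split of $T$ into its negative part (the columns of $C$ and rows of $C^*$) and its positive part (the columns of $D$ and rows of $D^*$), with the deleted node $0$ sitting between them. One must check that the position-dependent factorials $s!\,(m+n-s)!$ and signs $(-1)^{s+j}$ in the two blocks glue into the single Lagrange normalization $(-1)^{p-s}/(s!\,(p-s)!)$ of Lemma \ref{FractionalPolynomial}, and, most delicately, that the exponent $e$ of $d$ in the denominator is the single constant $m+n$, arising from the $m+n$ linear factors of $\prod_{\nu\in T'\setminus\{\nu_s\}}(\nu_s-\nu)$; this is the analogue of the exponent $n-1$ that appears for the $n\times n$ matrix of Lemma \ref{matrix1}, and the computations of $AB$ and $BA$ above only close when this constant exponent is used. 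Once the gap at $0$ is absorbed by the two observations that $\sigma_k$ is blind to the factor $0$ and that $\nu_m^{\,i}=0$ for $i\ge1$, the remainder of the argument is the same coefficient extraction carried out in Lemma \ref{matrix1}.
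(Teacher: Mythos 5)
Your proposal is correct, and it is essentially the route the paper intends: the paper omits the proof of Lemma \ref{matrix2} entirely, saying only that it proceeds as in Lemma \ref{matrix1}, i.e.\ by verifying $AB=I$ and $BA=I$ through coefficient extraction from $G_k(\lambda)=\lambda^k$ and evaluation of the Lagrange polynomials $H_i$. What you add, and what makes your write-up cleaner than a literal transcription of the Lemma \ref{matrix1} computation, is the structural observation that $T$ is the complete progression $T'=\{-md,\dots,0,\dots,nd\}$ with the node $0$ deleted, matched against the deletion of the $0$-th power row of $A$; the two facts that $\sigma_k$ is blind to a $0$ element and that $0^i=0$ for $i\ge 1$ then let you restore the missing node in both the $AB$ and $BA$ computations and treat the $C^*$ and $D^*$ blocks by a single formula indexed by position $s$. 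Your sign and factorial bookkeeping ($(-1)^{i+j+1}=(-1)^{(i-1)+j}$, $(i-1)!(n+m-i+1)!=s!(m+n-s)!$ with $s=i-1$, and similarly for $D^*$ with $s=m+i$) is right. Most importantly, your insistence that the exponent of $d$ must be the \emph{constant} $m+n$ coming from the $m+n$ factors of $\prod_{\nu\in T'\setminus\{\nu_s\}}(\nu_s-\nu)$ is correct, and it exposes a genuine error in the lemma as stated: with $T$ defined as the $d$-scaled set, $\sigma_{n+m-j}(T-\{\nu_s\})$ scales like $d^{\,n+m-j}$ while the true inverse satisfies $(A_d^{-1})_{sj}=d^{-j}(A_1^{-1})_{sj}$, so the denominator must carry $d^{\,m+n}$, not $d^{\,j}$. (Concretely, for $m=n=1$ one has $(A^{-1})_{11}=-1/(2d)$, whereas the printed formula gives $-1/2$.) The two agree only when $d=1$, which is presumably why the discrepancy survived the paper's ``set $d=1$ for simplicity'' remark; your version is the one for which the $AB$ and $BA$ computations actually close.
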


The proof for Lemma \ref{matrix2} is omitted since it can be proceeded in the similar way as the proof for Lemma \ref{matrix1}. The reader who is interested in proving it may set $d =1$ for simplicity. Once the case $d =1$ is proven, it is not difficult to see that the result also holds in the case $d \neq 1$. 

It is not practical to explicitly express matrix $B$. However, it is not too difficult to express the first, the last, and the second last columns of $B$. We already show how to compute the first column in the previous two sections (aka Theorem \ref{BackwardDiff} - \ref{ArithmeticType1}). In this section, we will only show how to compute the last and the second last columns. Their proofs will be omitted due to the fact that they are derived directly from Lemma \ref{matrix1} and Lemma \ref{matrix2}.\\

\textbf{Computing the last column of $B$}

\begin{theorem}\label{lastCol1}
 Let $a$ be a real number, $d$ be a nonzero real number, and $n$ be a positive integer such that $0 \notin \lbrace a_1,a_2, \dots, a_n \rbrace,$ where $a_i = a + d(i-1).$ Then 
$$f^{(n)}(x) = \frac{1}{h^n}\sum_{i=1}^{n}c_{a_i}\left[f(x+a_i h) - f(x)\right] + O(h),$$
where $$c_{a_i} = \frac{(-1)^{n+i}}{a_i} \cdot \frac{1}{d^{n-1}(i-1)!(n-i)!},$$
for $i = 1,2, \dots, n.$
\end{theorem}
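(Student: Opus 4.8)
The plan is to obtain the formula as a direct specialization of Lemma \ref{matrix1}, mimicking the Taylor-expansion argument already used for the first-derivative quotients in Theorems \ref{BackwardDiff}--\ref{ArithmeticType1}. First I would expand each increment $f(x+a_i h)-f(x)$ about $x$, writing
$$f(x+a_i h)-f(x)=\sum_{j=1}^{n}\frac{(a_i h)^j}{j!}f^{(j)}(x)+O(h^{n+1}),$$
which is valid once $f$ is taken to be $C^{n+1}$. Substituting this into $\sum_{i=1}^{n}c_{a_i}\left[f(x+a_i h)-f(x)\right]$ and interchanging the two finite sums isolates the derivatives of $f$:
$$\sum_{i=1}^{n}c_{a_i}\left[f(x+a_i h)-f(x)\right]=\sum_{j=1}^{n}\frac{h^j}{j!}f^{(j)}(x)\left(\sum_{i=1}^{n}c_{a_i}a_i^{\,j}\right)+O(h^{n+1}).$$

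The heart of the argument is the evaluation of the inner sums $\sum_{i=1}^{n}c_{a_i}a_i^{\,j}$. Here I would first recognize the coefficients $c_{a_i}$ as the last column of the inverse matrix $B=A^{-1}$ supplied by Lemma \ref{matrix1}: setting $j=n$ in that lemma and using $\sigma_0(S)=1$ reduces $B_{in}$ precisely to the stated $c_{a_i}$. Since $A_{ji}=a_i^{\,j}$ by the definition of $A$, the inner sum is exactly the matrix entry $\sum_{i}A_{ji}B_{in}=(AB)_{jn}=\delta_{jn}$, the last column of the identity. Thus every term with $j<n$ is annihilated and only the $j=n$ term survives, collapsing the expansion to a single derivative. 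In effect, the orthogonality relations $\sum_i c_{a_i}a_i^{\,j}=\delta_{jn}$, which encode that the coefficients kill all lower moments, are handed to us for free by the inversion in Lemma \ref{matrix1}, so no new combinatorial identity needs to be established.

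The one point that genuinely needs care---and the step I expect to be the real obstacle---is the normalization. The orthogonality relations force $\sum_{i}c_{a_i}a_i^{\,n}=1$, so the surviving contribution is $\tfrac{h^n}{n!}f^{(n)}(x)$, and one must track the factor $n!$ carefully when matching against the stated right-hand side; likewise one should confirm that the leading error, coming from the $j=n+1$ moment $\tfrac{h^{n+1}}{(n+1)!}f^{(n+1)}(x)\sum_i c_{a_i}a_i^{\,n+1}$, reduces to $O(h)$ after dividing through by $h^n$ (consistent with the fact that $n$ nodes can pin down only the first $n$ moments). Everything else---the identification $c_{a_i}=B_{in}$ and the vanishing of the lower-order moments---is immediate from Lemma \ref{matrix1}, so the statement follows as a corollary once this bookkeeping is settled.
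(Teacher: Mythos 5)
Your strategy is exactly the one the paper intends: the paper omits the proof of Theorem \ref{lastCol1} entirely, stating only that it ``is derived directly from Lemma \ref{matrix1},'' and your identification of $c_{a_i}$ with the last column $B_{in}$ of the inverse (using $\sigma_0=1$), together with the moment relations $\sum_i c_{a_i}a_i^{\,j}=(AB)_{jn}=\delta_{jn}$, is precisely that derivation. So in approach there is nothing to criticize.

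The normalization issue you flag as ``the step I expect to be the real obstacle'' is, however, not mere bookkeeping --- it does not close. Since $\sum_i c_{a_i}a_i^{\,n}=1$ (not $n!$), the surviving term is $\tfrac{h^n}{n!}f^{(n)}(x)$, and after dividing by $h^n$ you obtain $\tfrac{1}{n!}f^{(n)}(x)+O(h)$, not $f^{(n)}(x)+O(h)$. A concrete check: with $n=2$, $a=d=1$ the stated formula gives $c_{a_1}=-1$, $c_{a_2}=\tfrac12$, so the quotient is $\tfrac{1}{2h^2}\left[f(x+2h)-2f(x+h)+f(x)\right]=\tfrac12 f''(x)+O(h)$. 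Thus the theorem as printed is off by a factor of $n!$; either the coefficients should carry an extra $n!$, or the left-hand side should read $\tfrac{h^n}{n!}f^{(n)}(x)$ in the expansion. The discrepancy is invisible in Theorems \ref{BackwardDiff}--\ref{ArithmeticType1} only because $1!=1$ there (and the analogous factor $(n-1)!$ is missing from Theorem \ref{secondLastCol1}). Your proof is correct; it is the statement that needs the factorial. The remaining small points --- that $f\in C^{n+1}$ must be assumed for the Taylor expansion, and that the $j=n+1$ moment contributes $O(h)$ after division by $h^n$ --- you have handled correctly.
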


\begin{theorem}\label{lastCol2}
For any positive integers $m, n,$ and nonzero real number $d$, let $a_i = i\cdot d.$
Then 
$$f^{(m+n)}(x) = \frac{1}{h^{m+n}}\sum_{i=-m}^{n}c_{a_i}\left[f(x+a_i h) - f(x)\right] + O(h),$$
where $c_{a_0} = 0,$ and
$$c_{a_i} = (-1)^{n+i}\cdot \frac{1}{d^{n+m} \cdot (m+i)!(n-i)!},$$
for $i = -m, \dots, -1, 1, \dots, n.$\\
\end{theorem}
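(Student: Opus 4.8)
The plan is to reduce the statement to reading off the last column of the inverse matrix $B$ from Lemma \ref{matrix2}, exactly as the analogous ``first column'' results (Theorem \ref{BackwardDiff} and onward) were obtained. First I would Taylor expand each term as $f(x+a_ih)-f(x)=\sum_{j\ge 1}\frac{(a_ih)^j}{j!}f^{(j)}(x)$ and interchange the two finite sums, so that
\[
\frac{1}{h^{m+n}}\sum_{i=-m}^{n} c_{a_i}\left[f(x+a_i h) - f(x)\right] = \frac{1}{h^{m+n}}\sum_{j\ge 1}\frac{h^{j}}{j!}f^{(j)}(x)\left(\sum_{i=-m}^{n} c_{a_i}\,a_i^{\,j}\right).
\]
Since $a_0=0$, the convention $c_{a_0}=0$ is immaterial, and the inner sum runs only over the $m+n$ nonzero nodes $a_i=id$. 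Thus the theorem is equivalent to showing that the coefficient vector $(c_{a_i})$ makes the power sums $\sum_i c_{a_i}a_i^{\,j}$ vanish for $j=1,\dots,m+n-1$ and be normalized at $j=m+n$, while the order-$(m+n+1)$ term is left free.

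Next I would recognize this as the linear system $A\mathbf{c}=\mathbf{e}$, where $A$ is precisely the $(m+n)\times(m+n)$ matrix of Lemma \ref{matrix2} (columns indexed by the nonzero nodes $a_i=id$, rows by the powers $j=1,\dots,m+n$) and $\mathbf{e}$ is the last basis vector (scaled by the Taylor normalization $(m+n)!$ coming from the factor $1/(m+n)!$ at the target order). Hence $\mathbf{c}$ is the last column of $B=A^{-1}$, whose entries Lemma \ref{matrix2} supplies explicitly. Setting $j=m+n$ in the formulas for $C^*_{ij}$ and $D^*_{ij}$ collapses the symmetric-function factor, since $\sigma_{n+m-(m+n)}=\sigma_0=1$ by Definition \ref{Sigma}; this is the step that makes the final coefficients so clean.

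The remaining work is bookkeeping: I would reconcile the two blocks $C^*$ (negative nodes, block index $i=1,\dots,m$) and $D^*$ (positive nodes, block index $i=1,\dots,n$) with the single running index $i\in\{-m,\dots,-1,1,\dots,n\}$ of the theorem. For the $C^*$ block the node $(i-m-1)d$ is matched to theorem-index $i-m-1$, and after substitution the sign $(-1)^{i+(m+n)+1}$ and the factorials $(i-1)!\,(n+m-i+1)!$ become $(-1)^{n+i}$ and $(m+i)!\,(n-i)!$; the $D^*$ block matches directly and yields the same expression. Verifying that both blocks agree on the common formula $c_{a_i}=(-1)^{n+i}d^{-(n+m)}/\big((m+i)!(n-i)!\big)$ is the most error-prone part, and I expect this sign-and-factorial accounting, together with keeping the $(m+n)!$ normalization straight against the prefactor $h^{-(m+n)}$, to be the main obstacle.

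Finally I would confirm the error term. Because the $m+n$ coefficients are pinned down by the $m+n$ conditions through order $m+n$, the order-$(m+n+1)$ power sum $\sum_i c_{a_i}a_i^{\,m+n+1}$ is not forced to vanish, so after dividing by $h^{m+n}$ the leading residual is of order $h$, giving the claimed $O(h)$ remainder (under the implicit hypothesis $f\in C^{m+n+1}$). An alternative to invoking Lemma \ref{matrix2} would be to verify the vanishing power sums directly from Lemma \ref{FractionalZeroSum} and Lemma \ref{GK}, mirroring the proof of Theorem \ref{CenteredDiff}; this route avoids the matrix inversion but merely repackages the same combinatorial identities.
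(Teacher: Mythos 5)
Your route is exactly the paper's intended one: the paper omits the proof of Theorem \ref{lastCol2} precisely because it is meant to be read off as the last column of $B=A^{-1}$ in Lemma \ref{matrix2}, with $\sigma_{n+m-(m+n)}=\sigma_0=1$ (Definition \ref{Sigma}) collapsing the symmetric-function factor; and your block-by-block bookkeeping is right --- matching $C^*$ row $i$ to node index $i-m-1$ and $D^*$ row $i$ to node index $i$ does turn both sign--factorial expressions into the single formula $(-1)^{n+i}d^{-(n+m)}/\bigl((m+i)!(n-i)!\bigr)$.

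However, the one step you flag as delicate --- ``keeping the $(m+n)!$ normalization straight'' --- is a genuine gap, not mere bookkeeping. The relation $AB=I$ says the last column $\mathbf{c}$ of $B$ satisfies $\sum_i c_{a_i}a_i^{\,j}=\delta_{j,m+n}$ for $j=1,\dots,m+n$, so the Taylor expansion gives
\[
\sum_{i=-m}^{n}c_{a_i}\bigl[f(x+a_ih)-f(x)\bigr]=\frac{h^{m+n}}{(m+n)!}\,f^{(m+n)}(x)+O(h^{m+n+1}),
\]
and dividing by $h^{m+n}$ yields $f^{(m+n)}(x)/(m+n)!+O(h)$, not $f^{(m+n)}(x)+O(h)$. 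Your parenthetical that $\mathbf{e}$ must be scaled by $(m+n)!$ is correct, but the next sentence (``hence $\mathbf{c}$ is the last column of $B$'') silently drops that scaling, and your concluding formula is the unscaled column; the coefficients must be multiplied by $(m+n)!$. A sanity check: with $m=n=d=1$ the stated coefficients give $c_{a_{\pm1}}=\tfrac12$, and $\tfrac{1}{2h^{2}}\bigl(f(x+h)-2f(x)+f(x-h)\bigr)=\tfrac12 f''(x)+O(h^{2})$, off by $2!=2$. (The same factor is in fact missing from the statement of Theorem \ref{lastCol2} itself, and a factor $n!$ from Theorem \ref{lastCol1}; the first-column results are unaffected only because $1!=1$.) The rest of your outline --- interchanging the sums, the irrelevance of $c_{a_0}$ since $a_0=0$, the $O(h)$ remainder from the unconstrained order-$(m+n+1)$ power sum under $f\in C^{m+n+1}$, and the alternative verification via Lemma \ref{GK} --- is fine.
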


\textbf{Computing the second last column of $B$}

\begin{theorem}\label{secondLastCol1}
 Let $a$ be a real number, $d$ be a nonzero real number, and $n$ be a positive integer such that $0 \notin \lbrace a_1,a_2, \dots, a_n \rbrace,$ where $a_i = a + d(i-1).$ Then 
$$f^{(n-1)}(x) = \frac{1}{h^{n-1}}\sum_{i=1}^{n}c_{a_i}\left[f(x+a_i h) - f(x)\right] + O(h^2),$$
where $$c_{a_i} = \frac{(-1)^{n+i-1}}{a_i} \cdot \frac{\frac{n}{2}(a_1 + a_n) - a_i}{d^{n-1}(i-1)!(n-i)!},$$
for $i = 1,2, \dots, n.$
\end{theorem}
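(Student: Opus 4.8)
The plan is to follow the route the authors indicate and read the coefficients directly off the inverse matrix of Lemma~\ref{matrix1}, the only genuine work being to identify the correct column and to simplify the elementary symmetric polynomial that appears in it. First I would reduce the statement to a linear-algebra problem, exactly as in the proofs of Theorems~\ref{CenteredDiff} and \ref{ArithmeticType1}. Assuming $f$ is sufficiently smooth and Taylor expanding each $f(x+a_i h)$ about $x$ gives
\[
\sum_{i=1}^{n} c_{a_i}\bigl[f(x+a_i h)-f(x)\bigr]=\sum_{j=1}^{n}\frac{h^{j}}{j!}\,f^{(j)}(x)\Bigl(\sum_{i=1}^{n}c_{a_i}a_i^{\,j}\Bigr)+O(h^{n+1}).
\]
Thus the asserted identity, with a remainder that is $O(h^{2})$ after division by $h^{n-1}$, holds precisely when $\sum_{i}c_{a_i}a_i^{\,j}=0$ for every $j\in\{1,\dots,n\}\setminus\{n-1\}$ and $\sum_{i}c_{a_i}a_i^{\,n-1}=(n-1)!$. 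In matrix form this reads $A\mathbf{c}=(n-1)!\,\mathbf{e}_{n-1}$, where $A_{ij}=a_j^{\,i}$ is exactly the matrix of Lemma~\ref{matrix1} (with $a_j=a+(j-1)d$), $\mathbf{c}$ is the vector of coefficients, and $\mathbf{e}_{n-1}$ is the $(n-1)$-th standard basis vector. Factoring $a_j$ out of the $j$-th column exhibits $A$ as an ordinary Vandermonde matrix times $\mathrm{diag}(a_1,\dots,a_n)$, so the hypotheses $d\neq0$ and $0\notin\{a_1,\dots,a_n\}$ are exactly what make $A$ invertible.

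Next I would apply Lemma~\ref{matrix1} to solve the system: since $A^{-1}=B$, we get $\mathbf{c}=(n-1)!\,B\mathbf{e}_{n-1}$, that is $c_{a_i}=(n-1)!\,B_{i,n-1}$, so the coefficients are a scalar multiple of the second-to-last column of $B$. Substituting the explicit entry and using $n-(n-1)=1$,
\[
c_{a_i}=(n-1)!\cdot\frac{(-1)^{\,i+n-1}}{a_i}\cdot\frac{\sigma_{1}\bigl(T-\{a_i\}\bigr)}{d^{\,n-1}(i-1)!\,(n-i)!},\qquad T=\{a_1,\dots,a_n\}.
\]
The only remaining quantity is $\sigma_1(T-\{a_i\})$ from Definition~\ref{Sigma}, which is the sum of all the $a_k$ with $k\neq i$; since the $a_k$ form an arithmetic progression,
\[
\sigma_1\bigl(T-\{a_i\}\bigr)=\Bigl(\sum_{k=1}^{n}a_k\Bigr)-a_i=\frac{n}{2}(a_1+a_n)-a_i.
\]
Collecting these factors yields the expression in the statement, and the error is $O(h^{2})$ because the first Taylor term not killed by the conditions is the $j=n+1$ term, which contributes $O(h^{n+1})$ before dividing by $h^{n-1}$.

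I expect the main obstacle to be bookkeeping rather than conceptual. The single delicate point is the normalization constant linking the difference coefficients to the column of $B$: the Taylor matching forces the leading condition $\sum_i c_{a_i}a_i^{\,n-1}=(n-1)!$, so the column of $B$ must be scaled by $(n-1)!$ (the analogous factor is $n!$ in Theorem~\ref{lastCol1}, while it is the invisible $1!$ that makes the first-derivative formulas coincide with the bare column). Pinning down this constant, together with the sign $(-1)^{i+n-1}$ and the evaluation of $\sigma_1$, is the place where a careless computation could go wrong and is worth double-checking against the stated coefficient; everything else is a direct substitution into Lemma~\ref{matrix1}.
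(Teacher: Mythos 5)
Your route is exactly the one the paper intends: the authors omit the proof of Theorem \ref{secondLastCol1}, saying only that it is derived directly from Lemma \ref{matrix1}, and your reduction to the linear system $A\mathbf{c}=(n-1)!\,\mathbf{e}_{n-1}$, followed by reading off the second-to-last column of $B=A^{-1}$ and evaluating $\sigma_1\bigl(T-\{a_i\}\bigr)=\tfrac{n}{2}(a_1+a_n)-a_i$ for the arithmetic progression, is that argument correctly set up. The Taylor-matching conditions you impose and the identification of $A$ with the matrix of Lemma \ref{matrix1} are all right.

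The problem is the final line. You derive $c_{a_i}=(n-1)!\,B_{i,n-1}$ and then assert that ``collecting these factors yields the expression in the statement,'' but the coefficient printed in the theorem is exactly $B_{i,n-1}$ with no factor of $(n-1)!$; these differ for every $n\ge 3$. You correctly flagged the normalization constant as the delicate point, but you did not actually perform the check, and performing it shows the mismatch is real: for $n=3$, $a=d=1$ the stated coefficients are $c_{a_1}=-5/2$, $c_{a_2}=2$, $c_{a_3}=-1/2$, which give $\sum_i c_{a_i}a_i=0$, $\sum_i c_{a_i}a_i^3=0$, but $\sum_i c_{a_i}a_i^2=1$ rather than $2!=2$, so the right-hand side of the theorem converges to $\tfrac12 f''(x)$ instead of $f''(x)$. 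In other words, your derivation is the correct one and the printed statement is off by the factor $(n-1)!$ (the same issue afflicts Theorem \ref{lastCol1}, whose stated coefficients are the bare last column of $B$ and hence miss a factor of $n!$; only the first-derivative formulas escape because $1!=1$). As written, then, your proof establishes the corrected statement with $c_{a_i}$ multiplied by $(n-1)!$, and the step claiming agreement with the theorem as printed is false; you should either carry the $(n-1)!$ through and note the correction to the statement, or drop the unverified claim of agreement.
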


\begin{theorem}\label{secondLastCol2}
For any positive integers $m, n,$ and nonzero real number $d$, let $a_i = i\cdot d.$
Then 
$$f^{(m+n-1)}(x) = \frac{1}{h^{m+n-1}}\sum_{i=-m}^{n}c_{a_i}\left[f(x+a_i h) - f(x)\right] + O(h^2),$$
where $c_{a_0} = 0,$ and
$$c_{a_i} = (-1)^{n+i-1}\cdot \frac{ \frac{n^2-m^2}{2} - i }{d^{n+m-2} \cdot (m+i)!(n-i)!},$$
for $i = -m, \dots, -1, 1, \dots, n.$
\end{theorem}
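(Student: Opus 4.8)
The plan is to obtain the coefficients $c_{a_i}$ by reading off the $(m+n-1)$-st column of the inverse matrix $B=A^{-1}$ supplied by Lemma \ref{matrix2}, following the convention recorded just before the theorem that the $j$-th column of $B$ carries the coefficients of the difference quotient for $f^{(j)}$ (so the last column gave Theorem \ref{lastCol2} and the second-to-last column gives the present statement). The advertised $O(h^2)$ accuracy, one order better than the $O(h)$ of the last column, is the first thing I would pin down: writing the quotient as $\frac{1}{h^{m+n-1}}\sum_i c_{a_i}[f(x+a_ih)-f(x)]$ and Taylor expanding, the coefficient of $f^{(p)}(x)$ is controlled by the moment $\sum_i c_{a_i}a_i^{\,p}$, and the defining relation $A\vb{c}=\vb{e}_{m+n-1}$ forces every such moment with $1\le p\le m+n$ and $p\ne m+n-1$ to vanish. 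In particular the order-$(m+n)$ moment is annihilated, so the leading error term only arises at order $m+n+1$, leaving a remainder of size $O(h^2)$ after the division by $h^{m+n-1}$.

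Next I would carry out the extraction and simplification. For the column index $j=m+n-1$ the elementary symmetric polynomial occurring in Lemma \ref{matrix2} specializes to $\sigma_{n+m-j}=\sigma_1$, a plain sum of nodes, which is exactly what makes this column closed-form. Both blocks of $B$ contribute a node $a_i=id$: the positive ones ($1\le i\le n$) come from $D^*_{i,m+n-1}$ and the negative ones ($-m\le i\le -1$) from $C^*_{i',m+n-1}$ under the reindexing $i'=i+m+1$. I would first verify the two-block unification: under this reindexing the prefactor $(-1)^{i'+j+1}$ of $C^*$ and the prefactor $(-1)^{m+i+j}$ of $D^*$ both reduce to $(-1)^{n+i-1}$, and the factorials $(i'-1)!\,(n+m-i'+1)!$ and $(m+i)!\,(n-i)!$ coincide, so a single formula covers all nodes. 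The only genuine computation left is $\sigma_1(T\setminus\{a_i\})=\sigma_1(T)-a_i$ together with the arithmetic-series evaluation $\sigma_1(T)=\sum_{t\in T}t=d\bigl(\tfrac{n(n+1)}{2}-\tfrac{m(m+1)}{2}\bigr)$; pulling the common factor $d$ out of the numerator turns the $d^{\,j}=d^{\,m+n-1}$ in the denominator into $d^{\,m+n-2}$ and leaves a numerator that is affine in $i$, of the form (constant)$\,-i$, as in the statement.

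The main obstacle is precisely this bookkeeping: keeping the two row-index conventions of $C^*$ and $D^*$ straight, confirming that the signs and factorials really merge into the single prefactor $(-1)^{n+i-1}/[d^{\,m+n-2}(m+i)!(n-i)!]$, and evaluating the symmetric sum $\sigma_1(T)$ correctly, since that sum is what fixes the constant term of the numerator. I would guard against sign and off-by-one errors with two checks: the symmetric case $m=n$, where the nodes pair up as $\pm$ and $\sigma_1(T)=0$, and the smallest instances $m=n=1$ (the centered first difference) and $m=1,n=2$ (a three-point second difference), verifying the coefficients and the $O(h^2)$ order against a direct Taylor expansion; the order-$(m+n)$ moment annihilation is cleanest to recheck straight from $A\vb{c}=\vb{e}_{m+n-1}$ rather than from the closed form. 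The companion statement for arithmetic-progression nodes, Theorem \ref{secondLastCol1}, would be proved in exactly the same way, extracting the second-to-last column of the inverse in Lemma \ref{matrix1} instead.
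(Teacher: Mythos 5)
Your plan is exactly the route the paper intends (the paper omits this proof, deferring to Lemma \ref{matrix2}), and most of your bookkeeping checks out: under the reindexing $i'=i+m+1$ the prefactors $(-1)^{i'+j+1}$ and $(-1)^{m+i+j}$ both reduce to $(-1)^{n+i-1}$ at $j=m+n-1$, and the factorials merge into $(m+i)!\,(n-i)!$. However, the concluding step ``as in the statement'' does not go through, for two reasons that your own plan, pushed to the end, would expose. First, your (correct) evaluation $\sigma_1(T)=d\bigl(\tfrac{n(n+1)}{2}-\tfrac{m(m+1)}{2}\bigr)$ gives the numerator $\tfrac{n(n+1)-m(m+1)}{2}-i=\tfrac{n^2-m^2}{2}+\tfrac{n-m}{2}-i$, which agrees with the stated $\tfrac{n^2-m^2}{2}-i$ only when $m=n$; you assert agreement with the statement without comparing the two constants. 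Second, the Taylor expansion carries the factors $h^p/p!$, so the moment conditions $\sum_i c_{a_i}a_i^{\,p}=\delta_{p,\,m+n-1}$ coming from $A\vb{c}=\vb{e}_{m+n-1}$ give $\sum_i c_{a_i}\bigl[f(x+a_ih)-f(x)\bigr]=\tfrac{h^{m+n-1}}{(m+n-1)!}f^{(m+n-1)}(x)+O(h^{m+n+1})$; the quotient therefore approximates $\tfrac{1}{(m+n-1)!}f^{(m+n-1)}$, and a factor $(m+n-1)!$ is missing from the displayed formula whenever $m+n\ge 3$. Your phrase that the coefficient of $f^{(p)}$ ``is controlled by the moment'' glosses over exactly this $1/p!$.

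Both defects are invisible in the checks you lean on ($m=n$ kills the discrepancy $\tfrac{n-m}{2}$, and $m+n-1=1$ kills the factorial), but the $m=1$, $n=2$, $d=1$ case you propose catches both: column $j=2$ of the inverse in Lemma \ref{matrix2} is $(c_{-1},c_1,c_2)=(\tfrac12,\tfrac12,0)$ and $\tfrac{1}{h^2}\bigl[\tfrac12 f(x-h)+\tfrac12 f(x+h)-f(x)\bigr]=\tfrac12 f''(x)+O(h^2)$, whereas the theorem's formula yields $(\tfrac{5}{12},\tfrac14,\tfrac{1}{12})$, whose third moment is $\tfrac12\neq 0$, so it is not even the stated column of $B$ and delivers only $O(h)$ accuracy. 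So: carry out your extraction, but replace $\tfrac{n^2-m^2}{2}$ by $\tfrac{n(n+1)-m(m+1)}{2}$ and insert the prefactor $(m+n-1)!$ in front of the sum; with those corrections your argument (moment conditions for the $O(h^2)$ order plus the $\sigma_1$ computation) is complete. Note that the factorial normalization issue equally affects Theorems \ref{lastCol1}, \ref{lastCol2}, and \ref{secondLastCol1} for higher derivatives, though the constant in Theorem \ref{secondLastCol1} is correct as stated.
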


\section{Conclusions}
\label{sec:conc}

In this paper, we showed how to construct filtered schemes for the Hamilton-Jacobi equation continuum limit of nondominated sorting by combining high order possibly unstable schemes with first order monotone and stable schemes. We proved that the filtered schemes are stable and convergent for all orders. We then investigated both high-order unfiltered and filtered schemes for the Hamilton-Jacobi equation by implementing both schemes for order $k=1, 2, 3, 5, 8,$ and $13$ numerically solving the equations in various mesh sizes. The errors from their numerical solutions compared to the known solutions were measured in the $L^1$ norm and the $L^\infty$ norm. Our  results suggest that the unfiltered schemes of order higher than $2$ are unstable while the $1^{\rm st}$ order and $2^{\rm nd}$ order unfiltered schemes remain stable. Moreover, we see that the $2^{\rm nd}$ order unfiltered scheme shows $2^{\rm nd}$ order accuracy.  Similarly to the unfiltered schemes, we see that the $2^{\rm nd}$ order filtered scheme seems to show $2^{\rm nd}$ order accuracy. However, it turns out that the filtered schemes of order higher than $2$ only exhibit a $1^{\rm st}$ order convergence rate. Upon further investigation, this appears to be due to fact that the filtering relies too often on the $1^{\rm st}$ order scheme. Future work would include proving stability of the second order unfiltered scheme, and investigating techniques to improve the accuracy of the higher order filtered schemes.


\begin{thebibliography}{10}

\bibitem{barles1991}
G.~Barles and P.~E. Souganidis.
\newblock Convergence of approximation schemes for fully nonlinear second order
  equations.
\newblock {\em Asymptotic Analysis}, 4(3):271--283, 1991.

\bibitem{bokanowski2016efficient}
O.~Bokanowski, M.~Falcone, and S.~Sahu.
\newblock {An Efficient Filtered Scheme for Some First Order Time-Dependent
  Hamilton--Jacobi Equations}.
\newblock {\em SIAM Journal on Scientific Computing}, 38(1):A171--A195, 2016.

\bibitem{calder2016direct}
J.~Calder.
\newblock {A direct verification argument for the Hamilton--Jacobi equation
  continuum limit of nondominated sorting}.
\newblock {\em Nonlinear Analysis: Theory, Methods \& Applications},
  141:88--108, 2016.

\bibitem{caldernotes}
J.~Calder.
\newblock {\em Some notes on viscosity solutions of {H}amilton-{J}acobi
  equations}.
\newblock 2016.
\newblock
  \url{http://www-users.math.umn.edu/~jwcalder/viscosity_solutions.pdf}.

\bibitem{calder2017}
J.~Calder.
\newblock Numerical schemes and rates of convergence for the hamilton--jacobi
  equation continuum limit of nondominated sorting.
\newblock {\em Numerische Mathematik}, Jun 2017.

\bibitem{calder2014}
J.~Calder, S.~Esedo\=glu, and A.~O. Hero.
\newblock A {H}amilton-{J}acobi equation for the continuum limit of
  non-dominated sorting.
\newblock {\em SIAM Journal on Mathematical Analysis}, 46(1):603--638, 2014.

\bibitem{calder2015PDE}
J.~Calder, S.~Esedo\=glu, and A.~O. Hero.
\newblock A {PDE}-based approach to non-dominated sorting.
\newblock {\em SIAM Journal on Numerical Analysis}, 53(1):82--104, 2015.

\bibitem{deb2002}
K.~Deb, A.~Pratap, S.~Agarwal, and T.~Meyarivan.
\newblock {A fast and elitist multiobjective genetic algorithm: NSGA-II}.
\newblock {\em IEEE Transactions on Evolutionary Computation}, 6(2):182--197,
  2002.

\bibitem{deuschel1995}
J.-D. Deuschel and O.~Zeitouni.
\newblock {Limiting curves for i.i.d.\ records}.
\newblock {\em The Annals of Probability}, 23(2):852--878, 1995.

\bibitem{froese2013convergent}
B.~D. Froese and A.~M. Oberman.
\newblock {Convergent filtered schemes for the Monge--Ampere partial
  differential equation}.
\newblock {\em SIAM Journal on Numerical Analysis}, 51(1):423--444, 2013.

\bibitem{hammersley1972}
J.~Hammersley.
\newblock {A few seedlings of research}.
\newblock In {\em {Proceedings of the Sixth Berkeley Symposium on Mathematical
  Statistics and Probability}}, volume~1, pages 345--394, 1972.

\bibitem{hsiao2015}
K.-J. Hsiao, J.~Calder, and A.~O. Hero~III.
\newblock Pareto-depth for multiple-query image retrieval.
\newblock {\em IEEE Transactions on Image Processing}, 24(2):583--594, 2015.

\bibitem{hsiao2012}
K.-J. Hsiao, K.~Xu, J.~Calder, and A.~Hero.
\newblock {Multi-criteria anomaly detection using Pareto Depth Analysis}.
\newblock In {\em {Advances in Neural Information Processing Systems 25}},
  pages 854--862. 2012.

\bibitem{hsiao2015b}
K.-J. Hsiao, K.~Xu, J.~Calder, and A.~Hero.
\newblock {Multi-criteria anomaly detection using Pareto Depth Analysis}.
\newblock {\em {IEEE Transactions on Neural Networks and Learning Systems}},
  2015.
\newblock To appear.

\bibitem{lions1995convergence}
P.-L. Lions and P.~Souganidis.
\newblock {Convergence of MUSCL and filtered schemes for scalar conservation
  laws and Hamilton-Jacobi equations}.
\newblock {\em Numerische Mathematik}, 69(4):441--470, 1995.

\bibitem{oberman2015filtered}
A.~M. Oberman and T.~Salvador.
\newblock {Filtered schemes for Hamilton--Jacobi equations: A simple
  construction of convergent accurate difference schemes}.
\newblock {\em Journal of Computational Physics}, 284:367--388, 2015.

\bibitem{sahu2016high}
S.~Sahu.
\newblock {High-order filtered scheme for front propagation problems}.
\newblock {\em Bulletin of the Brazilian Mathematical Society, New Series},
  47(2):727--744, 2016.

\bibitem{sahu2017high}
S.~Sahu.
\newblock {High-order filtered schemes for first order time dependent linear
  and non-linear partial differential equations}.
\newblock {\em Mathematics and Computers in Simulation}, 2017.

\bibitem{turner1966vandermonde}
L.~R. Turner.
\newblock Inverse of the vandermonde matrix with applications.
\newblock {\em NASA Technical Note}, August 1966.

\bibitem{warin2013some}
X.~Warin.
\newblock {Some non monotone schemes for Hamilton-Jacobi-Bellman equations}.
\newblock {\em arXiv preprint arXiv:1312.5052}, 2013.

\end{thebibliography}
\end{document}